\numberwithin{equation}{section}		
\theoremstyle{plain}
{\bf}{\it}
\newtheorem{theorem}[equation]{Theorem}
\newtheorem{corollary}[equation]{Corollary}
\newtheorem{lemma}[equation]{Lemma}
\theoremstyle{definition}
\newtheorem{definition}[equation]{Definition}
\theoremstyle{remark}
\newtheorem{remark}[equation]{Remark}
\newcommand{\Sp}{{\mathrm{Sp}}}
\newcommand{\SL}{{\mathrm{SL}}}
\newcommand{\Gr}{{\mathrm{Gr}}}
\newcommand{\SGr}{{\mathrm{SGr}}}
\newcommand{\pt}{{\rm pt}}
\newcommand{\Hom}{{\mathrm{Hom}}}
\newcommand{\Z}{{\mathbb{Z}}}
\newcommand{\MSL}{{\mathrm{MSL}}}
\newcommand{\MSp}{{\mathrm{MSp}}}
\newcommand{\KO}{{\mathrm{KO}}}
\newcommand{\GW}{{\mathrm{GW}}}
\newcommand{\Vect}{{\rm Vect}}
\newcommand{\Tc}{{\mathcal{T}}}
\newcommand{\W}{\mathrm{W}}
\newcommand{\A}{\mathbb{A}}
\newcommand{\SSp}{\mathbb{S}}
\newcommand{\PP}{\mathbb{P}}
\newcommand{\rank}{\operatorname{rank}}
\newcommand{\Th}{\operatorname{Th}}
\newcommand{\thc}{\operatorname{th}}
\newcommand{\id}{\operatorname{id}}
\newcommand{\Cone}{\operatorname{Cone}}
\newcommand{\triv}{\mathbf{1}}
\newcommand{\Gm}{{\mathbb{G}_m}}
\newcommand{\SH}{\mathcal{SH}}
\newcommand{\Hp}{\mathcal{H}_\bullet}
\newcommand{\BO}{\mathrm{BO}}
\newcommand{\unit}{e}
\newcommand{\multko}{\dot{\cup}}
\newcommand{\Spec}{\operatorname{Spec}}
\begin{document}

\title[On the relation of special linear cobordism to Witt groups]{On the relation of special linear algebraic cobordism to Witt groups.}

\author{Alexey Ananyevskiy}
\address{Chebyshev Laboratory, 
		 St. Petersburg State University, 
		 14th Line, 29b, 
		 Saint Petersburg, 199178,
		 Russia}
\email{alseang@gmail.com}
\thanks{This research is supported by RFBR grants 13-01-00429, 14-01-31095 and 15-01-03034, by the Chebyshev Laboratory (Department of Mathematics and Mechanics, St. Petersburg State University) under RF Government grant 11.G34.31.0026, by JSC ``Gazprom Neft'' and by ``Dynasty'' foundation.}		 

\begin{abstract}
We reconstruct derived Witt groups via special linear algebraic cobordism. There is a morphism of ring cohomology theories which sends the canonical Thom class in special linear cobordism to the Thom class in the derived Witt groups. We show that for every smooth variety $X$ this morphism induces an isomorphism 
\[
\MSL_{\eta *}^{[\star]}(X)\otimes_{\MSL^{[2\star]}_{\hphantom{[}0}(\pt)}\W^{2\star}(\pt) \to \W^\star(X)[\eta,\eta^{-1}],
\]
where $\eta$ is the stable Hopf map. This result is an analogue of the result by Panin and Walter reconstructing hermitian $K$-theory using symplectic algebraic cobordism.
\end{abstract}

\maketitle

\section{Introduction.}
The main result of this paper relates special linear algebraic cobordism to derived Witt groups. It is a variation of the algebraic version of Conner and Floyd's theorem \cite[Theorem 10.2]{CF} that reconstructs real $K$-theory using symplectic cobordism. The direct algebraic analogue involving symplectic algebraic cobordism and hermitian $K$-theory was obtained by Panin and Walter \cite{PW4}. It claims that for every smooth variety $X$ there exists a canonically defined natural isomorphism
\[
\MSp^{[\star]}_{\hphantom{[}*}(X)\otimes_{\MSp^{[2\star]}_{\hphantom{[}*}(\pt)} \KO^{[2\star]}_{\hphantom{[}0} (\pt) \xrightarrow{\simeq} \KO^{[\star]}_{\hphantom{[}*}(X).
\]
\noindent Here $\KO^{[\star]}_{\hphantom{[}*}(-)$ are Schlichting's hermitian $K$-groups \cite{Sch10a,Sch10b,Sch12} and $\MSp^{[\star]}_{\hphantom{[}*}(-)$ stands for the symplectic algebraic cobordism that is the ring cohomology theory represented in the motivic stable homotopy category $\SH(k)$ by the spectrum $\MSp$ \cite{PW3}. We use a non-standard notation for the double grading of representable cohomology theories which could be rewritten in a more standard way as $A^{[n]}_{\hphantom{[}i}=A^{2n-i,n}$, see Section~2 for the details. Symplectic algebraic cobordism is the universal symplectically oriented cohomology theory \cite[Theorem 13.2]{PW3} and the above isomorphism is induced by the homomorphism $\MSp^{[\star]}_{\hphantom{[}*}(X) \xrightarrow{} \KO^{[\star]}_{\hphantom{[}*}(X)$ arising from the symplectic orientation of hermitian $K$-theory.

In fact, not only is hermitian $K$-theory symplectically oriented, but it is also $\SL$-oriented, thus by the universality of special linear algebraic cobordism \cite[Theorem 5.9]{PW3} there is a natural morphism $\MSL^{[\star]}_{\hphantom{[}*}(X) \xrightarrow{} \KO^{[\star]}_{\hphantom{[}*}(X)$ and one may ask whether we can substitute in the above theorem the special linear cobordism for the symplectic one. In this paper we show that after inverting a certain element $\eta\in \MSL^{[-1]}_{\hphantom{[}-1}(\pt)$ and the corresponding element in $\KO^{[-1]}_{-1}(\pt)$ one indeed obtains an isomorphism. The element $\eta$ arises in the following way. Recall that there is a spectrum $\BO$ representing hermitian $K$-theory \cite{PW2}, i.e. for this spectrum one has natural isomorphisms $\BO^{[n]}_{\hphantom{[} i}(X/U)\cong \KO^{[n]}_{\hphantom{[}i}(X,U)$. Every represented cohomology theory is a module over stable cohomotopy groups $\pi^{[\star]}_{\hphantom{[}*}(\pt)$ in a natural way. Hence for every motivic space $Y$ we have a structure of $\pi^{[\star]}_{\hphantom{[}*}(\pt)$-module on $\MSL^{[\star]}_{\hphantom{[}*}(Y)$ and on $\BO^{[\star]}_{\hphantom{[}*}(Y)$ and can localize these modules at the stable Hopf map $\eta\in\pi^{[-1]}_{\hphantom{[}-1}(\pt)$ corresponding to the $\Sigma_T^\infty$-suspension of $H\colon \A^{2}-\{0\}\to\PP^1, \, H(x,y)=[x:y]$. Theorem~\ref{thm_slCF} claims that for every small pointed motivic space $Y$ the universal morphism $\MSL^{[\star]}_{\hphantom{[}*}(Y) \xrightarrow{} \KO^{[\star]}_{\hphantom{[}*}(Y)$ induces an isomorphism
\[
\MSL^{[\star]}_{\eta *}(Y)\otimes_{\MSL^{[2\star]}_{\hphantom{[}0}(\pt)} \BO^{[2\star]}_{\eta 0} (\pt) \xrightarrow{\simeq} \BO^{[\star]}_{\eta*}(Y).
\]
\noindent See Definition~\ref{def_etaloc} for the precise meaning of $\MSL^{[\star]}_{\eta *}$ and $\BO^{[\star]}_{\eta*}$. 

For every smooth variety $X$ the corresponding pointed motivic space $X_+$ is small and the above formula could be rewritten as
\[
\MSL^{[\star]}_{\eta *}(X)\otimes_{\MSL^{[2\star]}_{\hphantom{[}0}(\pt)} \KO^{[2\star]}_{\eta_S0} (\pt) \xrightarrow{\simeq} \KO^{[\star]}_{\eta_S*}(X),
\]
where $\eta_S\in \KO^{[-1]}_{-1}(\pt)$ is the element corresponding to $\eta \in \BO^{[-1]}_{\hphantom{[}-1}(\pt)$ via the forementioned isomorphism $\BO^{[-1]}_{\hphantom{[}-1}(\pt)\cong \KO^{[-1]}_{-1}(\pt)$. In Theorem~\ref{thm_KOW} we construct natural isomorphisms
\[
\BO^{[\star]}_{\eta *}(X)\cong \KO_{\eta_S*}^{[\star]}(X) \cong \W^{\star}(X)[\eta,\eta^{-1}]
\]
\noindent with an appropriate graded ring structure on the right-hand side. Here $\W^{\star}(X)$ stands for the derived Witt groups defined by Balmer \cite{Bal99}, one can find a comprehensive survey including definitions and applications of these groups in \cite{Bal05}. Combining the above observations in Corollary~\ref{cor_slCFW} we obtain for a smooth variety $X$ a natural isomorphism
\[
\MSL^{[\star]}_{\eta *}(X)\otimes_{\MSL^{[2\star]}_{\hphantom{[}0}(\pt)} \W^{2\star} (\pt) \xrightarrow{\simeq} \W^{\star}(X)[\eta,\eta^{-1}].
\]

The paper is organized as follows. In section 2 we recall the general context of unstable $\Hp(k)$ and stable $\SH(k)$ motivic homotopy categories introduced by Morel and Voevodsky \cite{MV,V}. Then we do some preliminary calculations with stable cohomotopy groups $\pi^{[\star]}_{\hphantom{[}*}$ staying mainly in the unstable homotopy category $\Hp(k)$. In section 4 we deal with special linear and symplectic orientations and recall the universality theorems for the algebraic cobordism $\MSL$ and $\MSp$. In the next two sections we deal with hermitian $K$-theory and the spectrum $\BO$, in particular we show that $\BO^{[\star]}_{\eta *}(X)$ is isomorphic to the Laurent polynomial ring over the derived Witt groups $\W^{\star}(X)$. 

The last section is devoted to the main theorem relating $\MSL$-cobordism to the derived Witt groups. Using Panin and Walter's result one can easily show that the examined homomorphism is surjective and construct a section. The main issue is to show that the section maps the Thom classes of special linear bundles to the corresponding Thom classes. It is an easy observation that the claim holds for the Thom classes of symplectic bundles and, thanks to the theory of characteristic classes developed in \cite{An15}, the general case follows from this observation. Recall that for an $\SL$-oriented cohomology theory one has a good theory of characteristic classes only after inverting the stable Hopf map $\eta$, for the details see loc.cit. In this case one has analogues of the projective bundle theorem and the splitting principle. Roughly speaking, the splitting principle claims that working with $\SL$-oriented cohomology theories with inverted stable Hopf map one may think that every special linear bundle of even rank is a direct sum of special linear bundles of rank two. But a special linear bundle of rank two is a symplectic bundle in a natural way, thus every special linear bundle of even rank is, in a certain sense, a symplectic bundle. See \cite[Theorem~7]{An15} for the precise statement.

\textbf{Acknowledgments.}
The author wishes to express his sincere gratitude to I.~Panin for the numerous discussions on the subject of this paper and to the anonymous referee, whose thorough comments greatly improved the presentation.

\section{Preliminaries on $\SH(k)$ and ring cohomology theories.}

Throughout this paper $k$ is a field of characteristic different from $2$. 

Let $Sm/k$ be the category of smooth varieties over $k$. A motivic space over $k$ is a simplicial presheaf on $Sm/k$. Each variety $X\in Sm/k$ defines a motivic space $\Hom_{Sm/k}(-,X)$ constant in the simplicial direction. We write $\pt$ for $\Spec k$ regarded as a motivic space. Inverting the weak motivic equivalences in the category of pointed motivic spaces gives the pointed motivic unstable homotopy category $\Hp (k)$. The smash-products of pointed simplicial presheaves induces a symmetric monoidal structure on $\Hp(k)$ with the unit given by $\pt_+$.

Let $T=\A^1/(\A^1-\{0\})$ be the Morel-Voevodsky object. A $T$-spectrum $M$ is a sequence of pointed motivic spaces $(M_0,M_1,M_2,\dots)$ equipped with maps $\sigma_n  \colon T\wedge M_n\to M_{n+1}$. A map of $T$-spectra is a sequence of maps of pointed motivic spaces which is compatible with the structure maps. Inverting the stable motivic weak equivalences as in \cite{Jar} gives the motivic stable homotopy category $\SH(k)$. See loc. cit. for the discussion of symmetric monoidal structure on $\SH(k)$ based on motivic symmetric spectra.

A pointed motivic space $Y$ gives rise to a suspension $T$-spectrum 
\[
\Sigma^{\infty}_T Y=(Y,T\wedge Y, T\wedge T \wedge Y,\hdots).
\]
Put $\SSp=\Sigma^{\infty}_T (\pt_+)$ for the spherical spectrum. This spectrum is the unit for the monoidal structure on $\SH(k)$ constructed in \cite{Jar} and the suspension functor
\[
\Sigma^{\infty}_T\colon \Hp(k)\to \SH (k)
\]
is a strict symmetric monoidal functor. 

Recall that there are two spheres in $\Hp(k)$, the simplicial one $S^{1,0}= \Delta^1/\partial(\Delta^1)$ and $S^{1,1}=(\Gm,1)$. For integers $p,q\ge 0$ we write $S^{p+q,q}$ for $(S^{1,1})^{\wedge q}\wedge (S^{1,0})^{\wedge p}$ and $\Sigma^{p+q,q}$ for the suspension functor $-\wedge S^{p+q,q}$. This functor becomes invertible in the stable homotopy category $\SH(k)$, so we extend the notation to arbitrary integers $p,q$ in an obvious way. 

Any $T$-spectrum $A$ defines a bigraded cohomology theory on the category of pointed motivic spaces. Namely, for a pointed motivic space $(Y,y)$ one sets
\[
A^{[n]}_{\hphantom{[}i}(Y,y)=\Hom_{\SH(k)}(\Sigma^\infty_T (Y,y),\Sigma^{2n-i,n}A)
\]
and $A^{[\star]}_{\hphantom{[}*}(Y,y)=\bigoplus\limits_{n,i}A^{[n]}_{\hphantom{[} i}(Y,y)$. If $p-q,q\ge 0$ one has a canonical suspension isomorphism $A^{[n]}_{\hphantom{[} i}(Y,y)\cong A^{[n+q]}_{i-p+2q}(\Sigma^{p,q}(Y,y))$ induced by the permutation isomorphism $S^{2n-i,n}\wedge S^{p,q}\cong S^{2n-i+p,n+q}$. 
In the motivic homotopy category there is a canonical isomorphism $T\cong S^{2,1}$, we put 
\[
\Sigma_T\colon A^{[\star]}_{\hphantom{[}*}(Y,y)\xrightarrow{\simeq} A^{[\star+1]}_{\hphantom{[}*}((Y,y)\wedge T)
\]
for the corresponding suspension isomorphism. See Definition~\ref{def_susp} for the details. 

For a variety $X$ we put $A^{[n]}_{\hphantom{[}i}(X)=A^{[n]}_{\hphantom{[}i}(X_+,+)$ for the externally pointed motivic space $(X_+,+)$ associated to $X$. Groups $A^{[\star]}_{\hphantom{[}*}(X)$ are defined accordingly. Put $\pi^{[\star]}_{\hphantom{[}*}(X)=\SSp^{[\star]}_{\hphantom{[}*}(X)$ to be the stable cohomotopy groups of $X$. Given a closed embedding $i\colon Z\to X$ of varieties we write $\Th(i)$ for $X/(X-Z)$. For a particular case of the zero section $z\colon X\to E$ of a vector bundle we put 
\[
\Th(E)=\Th(z)=E/(E-X)
\]
and refer to it as \textit{Thom space} of $E$. Recall that for a $T$-spectrum $A$ a closed embedding $i\colon Z\to X$ of varieties gives rise to the long exact \textit{localization sequence}
\[
\hdots \xrightarrow{\partial} A^{[\star]}_{\hphantom{[}*}(\Th(i)) \xrightarrow{p^A} A^{[\star]}_{\hphantom{[}*}(X) \xrightarrow{j^A} A^{[\star]}_{\hphantom{[}*}(X-Z) \xrightarrow{\partial} A^{[\star]}_{*-1}(\Th(i)) \xrightarrow{} \hdots
\]
Here $j\colon X-Z\to X$ and $p\colon X\to \Th(i)$ are the obvious embedding and projection morphisms respectively. The localization sequence is the long exact sequence associated to the cofibration $j\colon X-Z\to X$.

A commutative ring $T$-spectrum is a commutative monoid 
\[
(A,\, m_A\colon A\wedge A\to A,\, \unit_A\colon \SSp\to A)
\]
in $\SH(k)$. The cohomology theory defined by such spectrum is a ring cohomology theory satisfying a certain bigraded commutativity condition described by Morel (see below). Recall some facts related to the multiplicative structure which are parallel to the classical topological ones studied in \cite[III.9]{Ad74}.

(1) \textit{Cross-product:} let $Y_1$ and $Y_2$ be pointed motivic spaces. Then there exists a functorial bilinear pairing
\[
\times\colon A^{[n]}_{\hphantom{[}i}(Y_1)\times A^{[m]}_{\hphantom{[}j}(Y_2)\to A^{[n+m]}_{\hphantom{[}i+j}(Y_1\wedge Y_2)
\]
given by $ a\times b= (m_A \wedge \sigma) \circ (\id_A \wedge \tau_{S^{2n-i,n},A}\wedge \id_{S^{2m-j,m}}) \circ (a\wedge b)$,
\[
\xymatrix @C=6pc{
Y_1 \wedge Y_2 \ar[r]^(0.4){a\wedge b} & A\wedge S^{2n-i,n}\wedge A\wedge S^{2m-j,m} \ar[d]^{\labelstyle \id_A \wedge \tau_{S^{2n-i,n},A}\wedge \id_{S^{2m-j,m}}} &  \\
& A\wedge A \wedge S^{2n-i,n} \wedge S^{2m-j,m} \ar[r]^{m_A \wedge \sigma} & A\wedge S^{2(n+m)-(i+j),n+m},
}
\]
where 
\[
\tau_{S^{2n-i,n},A}\colon S^{2n-i,n}\wedge A\xrightarrow{\simeq} A\wedge S^{2n-i,n},\quad \sigma\colon S^{2n-i,n} \wedge S^{2m-j,m}\xrightarrow{\simeq} S^{2(n+m)-(i+j),n+m}
\]
are canonical permutation isomorphisms.

(2) \textit{Cup-product:} for a pointed motivic space $Y$ there is a functorial graded ring structure
\[
\cup\colon A^{[\star]}_{\hphantom{[}*}(Y)\times A^{[\star]}_{\hphantom{[}*}(Y)\to A^{[\star]}_{\hphantom{[}*}(Y)
\]
given by $a\cup b= \Delta^A(a\times b)$, where $\Delta \colon Y\to Y\wedge Y$ is the diagonal morphism.
Moreover, let $i_1\colon Z_1\to X$ and $i_2\colon Z_2\to X$ be closed embeddings of varieties and put $i_{12}\colon Z_1\cap Z_2\to X$. Then there is a functorial, bilinear and associative cup-product
\[
\cup \colon A^{[\star]}_{\hphantom{[}*}(\Th(i_1))\times A^{[\star]}_{\hphantom{[}*}(\Th(i_2)) \to A^{[\star]}_{\hphantom{[}*}(\Th(i_{12}))
\]
given by $a\cup b = \widetilde{\Delta}^A(a\times b)$, where $\widetilde{\Delta} \colon \Th(i_{12}) \to \Th(i_{1})\wedge \Th(i_{2})$ is induced by the diagonal embedding $\Delta\colon X\to X\times X$.
In particular, setting $Z_1=X$ we have $\Th(i_1)=X_+$ and therefore obtain an $A^{[\star]}_{\hphantom{[}*}(X)$-module structure on $A^{[\star]}_{\hphantom{[}*}(\Th(i_2))$.
We will usually omit $\cup$ from the notation.

(3) \textit{Module structure over stable cohomotopy groups:} the unit morphism $\unit_A\colon \SSp\to A$ induces a homomorphism of graded rings $\pi^{[\star]}_{\hphantom{[}*}(\pt)\to A^{[\star]}_{\hphantom{[}*}(\pt)$. For every pointed motivic space $Y$ this homomorphism together with the cross-product define a $\pi^{[\star]}_{\hphantom{[}*}(\pt)$-bimodule structure on $A^{[\star]}_{\hphantom{[}*}(Y)$. For a variety $X$ there is a canonical morphism $X\to \pt$ inducing a homomorphism $\pi^{[\star]}_{\hphantom{[}*}(\pt)\to \pi^{[\star]}_{\hphantom{[}*}(X)$. We equip $A^{[\star]}_{\hphantom{[}*}(X)$ with the structure of a graded unital $\pi^{[\star]}_{\hphantom{[}*}(\pt)$-algebra using the composition $\pi^{[\star]}_{\hphantom{[}*}(\pt)\to \pi^{[\star]}_{\hphantom{[}*}(X)\to A^{[\star]}_{\hphantom{[}*}(X)$ and cup-product. The unit of $A^{[\star]}_{\hphantom{[}*}(X)$ is usually denoted by $1_X=1_X^A$.

(4) \textit{Graded $\epsilon$-commutativity} \cite[Lemma 6.1.1]{Mor04}: let $\epsilon\in \pi^{[0]}_{\hphantom{[}0}(\pt)$ be the element corresponding under the $T$-suspension isomorphism to the morphism $T\to T, x\mapsto -x$. Then for every pointed motivic space $Y$ and $a\in A^{[n]}_{\hphantom{[}i}(Y)$, $b\in A^{[m]}_{j}(Y)$ we have
\[
ab =(-1)^{ij}\epsilon^{nm}ba.
\]
Recall that $\epsilon^2=1_{\pt}$.

\section{Motivic spheres.}
In this section we recall certain canonical isomorphisms in the homotopy category and the definition of the stable Hopf map. Then we carry out a number of computations in the homotopy category in order to present in a convenient way the connecting homomorphism in the localization sequence for the embedding $\{0\}\subset \A^1$. The goal of this section is to show that one can obtain the stable Hopf map applying this connecting homomorphism to a certain natural element of the cohomotopy groups. Throughout this section we will write $\Gm,\A^1,\PP^1$ for the corresponding motivic spaces pointed by $1$ and $\Gm_{+},\A^1_+,\PP^1_+$ for the motivic spaces pointed externally. 

In order to write down the canonical isomorphisms for the different models of the motivic spheres one needs the cone construction.

\begin{definition}
Let $i\colon Y\to Y'$ be a morphism of pointed motivic spaces. The space $\Cone(i)$ defined via the cocartesian square
\[
\xymatrix{
Y \ar[r]^{i}\ar[d]^{in_1} & Y' \ar[d] \\
Y\wedge \Delta^1 \ar[r] & \Cone(i)
}\quad
\]
is called the \textit{cone of the morphism $i$}. Here $in_1\colon Y\cong Y\wedge \pt_+ \to Y\wedge \Delta^1$ is induced by the embedding $\pt_+=\partial\Delta^1\subset \Delta^1$.
\end{definition}

The following isomorphisms are well-known, see \cite[Lemma 2.15, Example 2.20]{MV}.

\begin{definition}
\label{def_rho}
Set $\rho=\rho_2\circ\rho_1^{-1}\colon T\xrightarrow{\simeq} \Gm\wedge S^1_s=S^{2,1}$ for the canonical isomorphism in the homotopy category defined via
\[
T\xleftarrow{\rho_1} \Cone(i) \xrightarrow{\rho_2} \Gm\wedge S^1_s
\]
where $i$ stands for the natural embedding $\Gm\to \A^1$ and the isomorphisms $\rho_1$ and $\rho_2$ are induced by the maps $\Delta^1\to \pt$ and $\A^1\to \pt$ respectively.
\end{definition}

\begin{definition}
Put $\sigma=\sigma_2^{-1}\circ\sigma_1\colon (\A^2-\{0\},(1,1))\xrightarrow{\simeq} \Gm\wedge T$ for the canonical isomorphism in the homotopy category. It is defined via
\[
(\A^2-\{0\},(1,1)) \xrightarrow{\sigma_1} (\A^2-\{0\})/((\A^1\times \Gm)\cup(\{1\}\times \A^1)) \xleftarrow{\sigma_2} \Gm\wedge T
\]
where $\sigma_1$ is induced by the identity map on $\A^2-\{0\}$ and $\sigma_2$ is induced by the natural embedding $\Gm\times \A^1\subset \A^2-\{0\}$. Recall that $\sigma_1$ is an isomorphism since the space $(\A^1\times \Gm)\cup(\{1\}\times \A^1)$ is $\A^1$-contractible, while $\sigma_2$ is induced by the excision isomorphism $\Gm_+\wedge T\cong (\A^2-\{0\})/(\A^1\times \Gm),$ so it is an isomorphism as well.
\end{definition}

\begin{definition}
\label{def_susp}
Let $A$ be a $T$-spectrum and $Y$ be a pointed motivic space. Identifying $T\cong S^{2,1}$ via $\rho$ we denote 
\[
\Sigma_T=(\id_Y\wedge \rho)^A\circ\Sigma^{2,1}\colon A^{[n]}_{\hphantom{[}i}(Y)\xrightarrow{\simeq}A^{[n+1]}_{\hphantom{[}i}(Y\wedge T)
\] 
the \textit{$T$-suspension isomorphism}.
\end{definition}

\begin{definition}
The \textit{Hopf map} is the morphism of the varieties
\[
H\colon \A^2-\{0\} \to \PP^1
\]
defined via $H(x,y)=[x,y]$. Pointing $\A^2-\{0\}$ by $(1,1)$ and $\PP^1$ by $[1:1]$ and taking the suspension spectra we obtain the corresponding morphism
\[
\Sigma^\infty_T H \in \Hom_{\SH (k)}(\Sigma^\infty_T(\A^2-\{0\},(1,1)),\Sigma^\infty_T \PP^1).
\]
In order to interpret this morphism as an element of $\pi^{[-1]}_{\hphantom{[}-1}(\pt)$ we introduce the following isomorphism. Let $\vartheta=\rho \vartheta_2^{-1}\vartheta_1\in \Hp(k)$ be the composition 
\[
\vartheta\colon \PP^1\xrightarrow{\vartheta_1}\PP^1/\A^1\xleftarrow{\vartheta_2} T\xrightarrow{\rho} S^{2,1}.
\]
Here $\vartheta_1$ is induced by the identity map on $\PP^1$ and $\vartheta_2$ is the excision isomorphism given by $\vartheta_2(x)=[x:1]$.
\textit{Stable Hopf map} is the unique element
\[
\eta \in \pi^{[-1]}_{\hphantom{[}-1}(\pt) \cong \pi^{[1]}_{\hphantom{[}0}(\Gm\wedge T)  \cong \pi^{[1]}_{\hphantom{[}0}(\A^2-\{0\},(1,1)) \cong \Hom_{\SH (k)}(\Sigma^\infty_T(\A^2-\{0\},(1,1)),\Sigma^\infty_T \PP^1)
\]
subject to the relation
\[
\sigma^\pi(\Sigma_T\Sigma^{1,1}\eta)=\Sigma^\infty_T (\vartheta \circ H).
\]
In other words, stable Hopf map $\eta$ is the element of $\pi^{[-1]}_{\hphantom{[}-1}(\pt)$ corresponding to $\Sigma_T^\infty H$ via the canonical isomorphisms between motivic spheres and suspension isomorphisms. For a commutative ring $T$-spectrum $A$ we denote by the same letter $\eta$ the image of the stable Hopf map in $A^{[-1]}_{\hphantom{[}-1}(\pt)$ under the homomorphism $\pi^{[-1]}_{\hphantom{[}-1}(\pt)\to A^{[-1]}_{\hphantom{[}-1}(\pt)$ induced by the unit morphism $\SSp\xrightarrow{\unit_A} A$.
\end{definition}

\begin{definition}
\label{def_symb}
Let $A\in \SH(k)$ be a commutative ring $T$-spectrum. Consider a variety $X$ and an invertible function $f\in k[X]^*$ on $X$. This function defines an automorphism 
\[
f_T\colon X_+\wedge T\to X_+\wedge T
\] 
via $f_T(x,t)=(x,f(x)t)$. We call the element
\[
\langle f\rangle_A=\Sigma_T^{-1}(f_T^A(\Sigma_T 1_X)) \in A^{[0]}_{\hphantom{[}0}(X)
\]
the \textit{symbol associated to $f$ in $A$}.
\end{definition}

\begin{remark}
By the very definition we have $\langle -1\rangle_\pi=\epsilon$.
\end{remark}

Consider the closed embedding $\{0\}\to \A^1$ and the localization sequence
\[
A^{[0]}_{\hphantom{[}0}(\A^1_+)\to A^{[0]}_{\hphantom{[}0}(\Gm_+)\xrightarrow{\partial} A^{[0]}_{-1}(T).
\]
Our goal is to prove the following theorem.
\begin{theorem}
\label{thm_etacoh}
Let $A\in \SH(k)$ be a commutative ring $T$-spectrum and let 
\[
\partial \colon A^{[0]}_{\hphantom{[}0}(\Gm_+)\to A^{[0]}_{-1}(T)
\]
be the connecting homomorphism in the long exact localization sequence for the embedding $\{0\}\subset \A^1$. Then for the coordinate function $t\in k[\Gm]^*$ we have
\[
\partial(\langle-t^{-1}\rangle_A)=\Sigma_T\eta.
\]
\end{theorem}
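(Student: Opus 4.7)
The plan is to express the connecting homomorphism $\partial$ as pullback along an explicit geometric map in $\Hp(k)$ and then match the resulting element with $\Sigma_T\eta$ using the defining relation of the stable Hopf map. First I identify $\partial$ with the Puppe boundary. The cofibration $\Gm_+\hookrightarrow \A^1_+$ has cofiber $T$, and extending the sequence produces the boundary map $\delta\colon T\to \Gm_+\wedge S^1_s$ realized as the collapse of $\A^1_+$ inside $Cone(\Gm_+\to \A^1_+)$. Under the canonical $S^1_s$-desuspension $\pi^{*+1,*}(\Gm_+\wedge S^1_s)\cong \pi^{*,*}(\Gm_+)$, the connecting homomorphism $\partial$ identifies with $\delta^\pi$.

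Next I unfold the symbol. By Definition~\ref{def_symb}, $\langle -t^{-1}\rangle_\pi$ is the $T$-desuspension of the automorphism
\[
f_T\colon \Gm_+\wedge T \to \Gm_+\wedge T,\qquad (t,s)\mapsto (t,-s/t).
\]
Combining with the previous step, $\partial(\langle -t^{-1}\rangle_\pi)\in \pi^{1,0}(T)$ is represented, after one $T$-suspension, by the composite of $\delta\wedge\mathrm{id}_T\colon T\wedge T\to \Gm_+\wedge S^1_s\wedge T$ with the endomorphism of $\Gm_+\wedge S^1_s\wedge T$ induced by $f_T$ after swapping the $S^1_s$ and $T$ factors.

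The final step is to match this composite with a representative of $\Sigma_T\eta$ via the defining relation $\sigma^\pi\Sigma_T\Sigma^{1,1}\eta = \Sigma^\infty_T\vartheta H$. The geometric content is that, relative to the open cover $\A^2-\{0\} = (\Gm\times\A^1)\cup(\A^1\times\Gm)$, the Puppe boundary $\delta$ encodes exactly the Mayer--Vietoris gluing data assembling $\PP^1$ from two copies of $\A^1$ along $\Gm$ via $u\mapsto u^{-1}$, while the twist $(t,s)\mapsto (t,-s/t)$ on $\Gm_+\wedge T$ is the reciprocal-and-sign adjustment needed to turn the resulting map into $(x,y)\mapsto[x:y]$ in the chart $\Gm\times\A^1$. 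Assembling through $\sigma$ and $\vartheta$ then identifies the composite with $\Sigma^\infty_T\vartheta H$, which is $\Sigma_T\eta$ after the requisite suspensions.

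The main obstacle is sign bookkeeping: the $-1$ in $-t^{-1}$ is dictated by the need to obtain $+\eta$ rather than $\epsilon\eta$, and verifying this requires tracking the orientation conventions in $\rho$, $\sigma$ and $\vartheta$ simultaneously. A convenient strategy is to fix explicit representatives of $\delta$ and $H$ on the standard cover of $\A^2-\{0\}$ and perform the comparison at the level of maps of pointed motivic spaces in $\Hp(k)$ before passing to $\SH(k)$.
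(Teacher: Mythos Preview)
Your outline matches the paper's strategy: identify $\partial$ via the Puppe boundary, unfold the symbol as the endomorphism $(t,s)\mapsto(t,-s/t)$ of $\Gm_+\wedge T$, and then compare with the defining relation $\sigma^\pi\Sigma_T\Sigma^{1,1}\eta=\Sigma^\infty_T\vartheta H$. The gap is that your Step~3 is asserted rather than proved. Saying that the Puppe boundary ``encodes the Mayer--Vietoris gluing data'' and that the twist is ``the reciprocal-and-sign adjustment needed'' is a heuristic; the actual content of the theorem is precisely that this assembling works, with the correct sign. Your last paragraph acknowledges this and proposes a strategy without executing it.

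The paper fills this gap with a concrete intermediate identity (its Lemma~\ref{lem_concoh}): after one $T$-suspension, the connecting map $\delta_T=\Sigma_T\delta\Sigma_T^{-1}$ on $\pi^{*,*}(\Gm_+\wedge T)$ equals the composite $\sigma_2^\pi(\sigma_1^\pi)^{-1}\psi_1^\pi(\phi^\pi)^{-1}$, where $\phi$ is the excision isomorphism $\Gm_+\wedge T\cong(\A^2-\{0\})/(\A^1\times\Gm)$ and $\psi_1$ is induced by the identity on $\A^2-\{0\}$. Proving this formula uses the splitting $\delta r^\pi=\mathrm{id}$ (Lemma~\ref{lem_cohspit}) plus a kernel comparison, not just a diagram chase. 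Once this is in hand, your map $(t,x)\mapsto(t,-x/t)$ composed with the projection to $T$ becomes $H_1\colon(t,x)\mapsto[x:-t]\in\PP^1/\A^1$, and the comparison with $\vartheta H$ reduces to a small commutative square in which the residual sign is absorbed by the automorphism $\tau\colon[x:y]\mapsto[y:-x]$ of $\PP^1/\A^1$, which is the identity in $H_\bullet(k)$. This last step is exactly the ``sign bookkeeping'' you flag; it is not optional, and without the explicit formula for $\delta_T$ there is no anchor for it.
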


Using the morphism of cohomology theories $\pi^{[\star]}_{\hphantom{[}*}(-)\to A^{[\star]}_{\hphantom{[}*}(-)$ induced by the unit morphism $\unit_A\colon \SSp\to A$ one easily sees that it is sufficient to treat the case of $A=\SSp$. For the remaining part of the section we assume that $A=\SSp$ and work with the stable cohomotopy groups $\pi^{[\star]}_{\hphantom{[}*}(-)=\SSp^{[\star]}_{\hphantom{[}*}(-)$.

In order to prove the above theorem we need the next lemmas describing $\partial$ in a convenient way. Set $\delta=\Sigma^{-1,0}(\rho^{\pi})^{-1}\partial$,
\[
\xymatrix{
\pi^{[n]}_{\hphantom{[}i}(\Gm_+) \ar[d]_{\delta} \ar[r]^{\partial} & \pi^{[n]}_{i-1}(T) \\
\pi^{[n]}_{\hphantom{[}i}(\Gm) \ar[r]^(0.4){\Sigma^{1,0}}_(0.4){\simeq} & \pi^{[n]}_{i-1}(\Gm\wedge S^{1}_s). \ar[u]_{\rho^\pi}^{\simeq}
}
\]

\begin{lemma}
For the natural map $r\colon \Gm_{+}\to \Gm$ we have $\delta r^{\pi}=\id$.
\label{lem_cohspit}
\end{lemma}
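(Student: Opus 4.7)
The plan is to reduce the claim to a direct comparison of cofiber sequences in $H_\bullet(k)$. By definition $\delta=\Sigma^{-1,0}(\rho^\pi)^{-1}\partial$, so for $\alpha\in\pi^{p,q}(\Gm)$ I would first unpack $\partial(r^\pi\alpha)\in\pi^{p+1,q}(T)$.

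The boundary $\partial$ arises from the Puppe extension of the cofiber sequence $\Gm_+\to\A^1_+\to T$ (with $T\cong\A^1_+/\Gm_+$), and I will model it by $\rho_2^+\circ(\rho_1^+)^{-1}$ using the cone $C^+:=Cone(\Gm_+\to\A^1_+)$, where
\[
T\xleftarrow{\rho_1^+}C^+\xrightarrow{\rho_2^+}\Sigma\Gm_+
\]
are the canonical maps collapsing $\Gm_+\wedge\Delta^1$ and $\A^1_+$ respectively. The first is a motivic weak equivalence (since $\Delta^1$ is contractible), whereas the second is not (note $\A^1_+\simeq S^0$ in $H_\bullet(k)$) and carries the genuine boundary content.

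The key step is the strictly commutative ladder
\[
\xymatrix{
T \ar@{=}[d] & C^+ \ar[l]_{\rho_1^+} \ar[r]^{\rho_2^+} \ar[d]_{\tilde r_C} & \Sigma\Gm_+ \ar[d]^{\Sigma r} \\
T & Cone(\Gm\to\A^1) \ar[l]_{\rho_1} \ar[r]^{\rho_2} & \Sigma\Gm
}
\]
where $\tilde r_C$ is induced by $r\colon\Gm_+\to\Gm$ and its compatible extension $\A^1_+\to\A^1$ collapsing the added basepoint to $1$. Both squares commute on the nose by functoriality of the cone construction together with the compatibility of $r$ with the embedding $\Gm\subset\A^1$. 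The bottom row composes to $\rho$ by Definition~\ref{def_rho}, so the diagram yields
\[
(\Sigma r)\circ\rho_2^+\circ(\rho_1^+)^{-1}=\rho\quad\text{in}\ H_\bullet(k).
\]

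Passing to $\SH(k)$ and evaluating on an arbitrary $\alpha\colon\Sigma^\infty_T\Gm\to\Sigma^{p,q}\SSp$ gives
\[
\partial(r^\pi\alpha)=\Sigma^{1,0}\alpha\circ\Sigma^\infty_T\rho=\rho^\pi(\Sigma^{1,0}\alpha),
\]
and applying $\Sigma^{-1,0}(\rho^\pi)^{-1}$ returns $\alpha$, establishing $\delta r^\pi=\mathrm{id}$. The point that genuinely requires care is the identification of $\rho_2^+\circ(\rho_1^+)^{-1}$ with the stable boundary $\partial$, precisely because $\rho_2^+$ is not itself an equivalence; once $C^+$ is exhibited as a model of the homotopy cofiber of $\Gm_+\to\A^1_+$ and $\rho_2^+$ as the canonical collapse to $\Sigma\Gm_+$, the rest of the argument is formal.
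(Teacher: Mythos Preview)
Your argument is correct and follows essentially the same route as the paper. Both proofs compare the cofiber sequence of $i_+\colon\Gm_+\to\A^1_+$ with that of $i\colon\Gm\to\A^1$ via the map induced by $r$, and both reduce the claim $\partial r^\pi=\rho^\pi\Sigma^{1,0}$ to the identity $(\Sigma r)\circ(\text{Puppe map for }i_+)=\rho$ in $H_\bullet(k)$. Your ladder is exactly the paper's diagram in different notation: your $\rho_1^+$, $\rho_2^+$, $\tilde r_C$ are the paper's $\psi_1$, $u$, $v$. The only cosmetic difference is that the paper models the Puppe map $T\to\Gm_+\wedge S^1_s$ through the iterated cone $Cone(j_1)$ (with $j_1\colon\A^1_+\to Cone(i_+)$) and the equivalence $w\colon Cone(j_1)\to\Gm_+\wedge S^1_s$, writing it as $wj_2\psi_1^{-1}$, whereas you use the direct collapse $\rho_2^+$; the paper's own diagram records that $wj_2=u$ in $H_\bullet(k)$, so the two models coincide and your flagged ``point that requires care'' is already absorbed in the paper's setup.
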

\begin{proof}
The statement of the lemma is equivalent to $\partial r^\pi=\rho^\pi\Sigma^{1,0}$. Denote $i\colon \Gm\to \A^1$ and $i_+\colon \Gm_+\to \A^1_+$ the natural embeddings and let $j_1\colon \A^1_+\to \Cone(i_+)$ and $j_2\colon \Cone(i_+)\to \Cone(j_1)$ be the natural maps for the cone construction. 

Consider the following diagram.
\[
\xymatrix{
\Gm\wedge S^{1}_s & \Cone(i) \ar[r]^(0.6){\rho_1}_(0.6){\simeq} \ar[l]_(0.4){\rho_2}^(0.4){\simeq}   & T \\
& \Cone(i_+) \ar[d]^{j_2}\ar[ld]_{u}\ar[ur]^{\simeq}_{\psi_1}\ar[u]_{v}^{\simeq}
& \\
\Gm_+\wedge S^1_s  \ar[uu]^{r\wedge \id} & \Cone(j_1) \ar[l]_(0.4){w}^(0.4){\simeq} & 
}
\]
Here 
$\psi_1$ and $w$ are induced by $\Delta^1\to \pt$, $v$ is induced by $\{+\}\to \{1\}$ and $u$ is induced by $\A^1_+\to \pt$. One can check that this diagram is commutative in the homotopy category. By the very definition we have 
\[
\partial r^\pi=(wj_2\psi_1^{-1})^\pi\Sigma^{1,0}r^\pi=(wj_2\psi_1^{-1})^\pi(r\wedge \id)^\pi\Sigma^{1,0}=((r\wedge \id)wj_2\psi_1^{-1})^\pi\Sigma^{1,0},
\]
thus it is sufficient to show
\[
(r\wedge \id)wj_2\psi_1^{-1}=\rho_2\rho_1^{-1}=\rho
\]
and this follows from the commutativity of the above diagram.
\end{proof}

Suspending $\delta$ with $\Sigma_T$ and shifting the indices we obtain a homomorphism $\delta_T=\Sigma_T\delta\Sigma_T^{-1}$,
\[
\xymatrix{
\pi^{[n]}_{\hphantom{[}i}(\Gm_+\wedge T) \ar[r]^{\delta_T}  &  \pi^{[n]}_{\hphantom{[}i}(\Gm\wedge T) \\
\pi^{[n-1]}_{\hphantom{[}i}(\Gm_+) \ar[r]^{\delta} \ar[u]^{\Sigma_T}_{\simeq} & \pi^{[n-1]}_{\hphantom{[}i}(\Gm) \ar[u]^{\Sigma_T}_{\simeq}.
}
\]
Put $Y=(\A^2-\{0\})/((\A^1\times \Gm)\cup(\{1\}\times \A^1))$ and consider the following diagram
\[
\xymatrix{
\pi^{[n]}_{\hphantom{[}i}((\A^2-\{0\})/(\A^1\times \Gm)) \ar[rr]^(0.6){\phi^\pi}_(0.6){\simeq} \ar[d]_{\psi_1^\pi} & & \pi^{[n]}_{\hphantom{[}i}(\Gm_+\wedge T) \ar[d]_{\delta_T}\\
\pi^{[n]}_{\hphantom{[}i}(\A^2-\{0\},(1,1)) & \pi^{[n]}_{\hphantom{[}i}(Y) \ar[l]_(0.35){\sigma_1^\pi}^(0.35){\simeq} \ar[ul]_(0.4){\psi_2^\pi}  \ar[r]^(0.45){\sigma_2^\pi}_(0.45){\simeq} & \pi^{[n]}_{\hphantom{[}i}(\Gm\wedge T) \ar@<-0.5pc>[u]_{(r\wedge \id)^\pi}
}
\]
where $\psi_1$ and $\psi_2$ are induced by the identity map on $\A^2-\{0\}$, $\phi^\pi$ is the excision isomorphism induced by the embedding $\Gm\times\A^1\subset \A^2-\{0\}$ and $r\colon \Gm_+\to \Gm$ is the obvious map.

\begin{lemma}
\label{lem_concoh}
In the above diagram we have
\[
\delta_T=\sigma_2^\pi(\sigma_1^\pi)^{-1}\psi_1^\pi(\phi^\pi)^{-1}.
\]
\end{lemma}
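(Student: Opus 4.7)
Verify the identity on the two summands of the motivic splitting
\[
\pi^{*,*}(\Gm_+\wedge T)\cong\pi^{*,*}(\Gm\wedge T)\oplus\pi^{*,*}(T)
\]
arising from $\Gm_+\simeq\Gm\vee S^0$. The first summand is the image of $(r\wedge\mathrm{id})^\pi$, while the second is the image of $(q\wedge\mathrm{id})^\pi$ for the pointed collapse $q\colon\Gm_+\to S^0$, which factors as $\Gm_+\hookrightarrow\A^1_+\xrightarrow{\simeq}S^0$. Since both $\delta_T$ and the right-hand side are additive, agreement on each summand suffices.

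On the first summand, the suspended form of Lemma~\ref{lem_cohspit} gives $\delta_T\circ(r\wedge\mathrm{id})^\pi=\mathrm{id}$. For the right-hand side, the key geometric input is the identity
\[
(r\wedge\mathrm{id}_T)\circ\phi^{-1}\;=\;\sigma_2^{-1}\circ\psi_2
\]
in $H_\bullet(k)$: under the identification of $(\A^2-\{0\})/(\A^1\times\Gm)$ with $(\Gm\wedge T)\vee T$ induced by $\phi$, both maps correspond to the collapse of the $T$-summand coming from the image of $\{1\}\times\A^1$. Granting this, $\sigma_1=\psi_2\circ\psi_1$ and contravariance give
\[
\sigma_2^\pi(\sigma_1^\pi)^{-1}\psi_1^\pi(\phi^\pi)^{-1}(r\wedge\mathrm{id})^\pi=\sigma_2^\pi(\sigma_1^\pi)^{-1}\sigma_1^\pi(\sigma_2^\pi)^{-1}=\mathrm{id}.
\]

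On the second summand both sides vanish. For $\delta_T$: the factorization of $q$ through $\A^1_+$ shows that $(q\wedge\mathrm{id})^\pi$ lies in the image of $\pi^{*,*}(\A^1_+\wedge T)\to\pi^{*,*}(\Gm_+\wedge T)$, which equals $\ker\partial_T=\ker\delta_T$ by the long exact cohomology sequence of $\Gm_+\wedge T\to\A^1_+\wedge T\to T\wedge T$. For the right-hand side, the geometric cofiber sequence $\A^1\times\Gm\hookrightarrow\A^2-\{0\}\to(\A^2-\{0\})/(\A^1\times\Gm)$ becomes, under the equivalences $\A^1\times\Gm\simeq\Gm$, $\sigma_1^{-1}\sigma_2\colon\Gm\wedge T\xrightarrow{\simeq}\A^2-\{0\}$, and $\phi$, an algebraic cofiber sequence $\Gm\xrightarrow{0}\Gm\wedge T\to\Gm_+\wedge T$: the inclusion $\A^1\times\Gm\hookrightarrow\A^2-\{0\}$ becomes null because any point $(1,x)\in\{1\}\times\Gm\subset\A^1\times\Gm$ is collapsed to the basepoint by $\sigma_1$ (since $\{1\}\times\A^1\subset(\A^1\times\Gm)\cup(\{1\}\times\A^1)$). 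Consequently $\phi^{-1}\circ\psi_1\circ\sigma_1^{-1}\circ\sigma_2\colon\Gm\wedge T\to\Gm_+\wedge T$ is homotopic to the wedge inclusion of the $\Gm\wedge T$-summand, so composing with the projection $q\wedge\mathrm{id}$ onto the $T$-summand yields zero.

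The main obstacle is the verification of the geometric identity $(r\wedge\mathrm{id}_T)\circ\phi^{-1}=\sigma_2^{-1}\circ\psi_2$ and the identification of the two cofiber sequences above. Both amount to bookkeeping with the explicit excision models for $\phi,\sigma_2$ together with the $\A^1$-contractibility of $\A^1\times\Gm$ and of $(\A^1\times\Gm)\cup(\{1\}\times\A^1)$; once these are settled, the lemma follows formally.
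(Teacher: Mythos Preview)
Your overall strategy coincides with the paper's: verify that both $\delta_T$ and $\delta_T':=\sigma_2^\pi(\sigma_1^\pi)^{-1}\psi_1^\pi(\phi^\pi)^{-1}$ act as the identity after precomposing with $(r\wedge id)^\pi$, and that both kill the complementary $T$-summand. Your treatment of the first summand is exactly the paper's diagram chase: the identity $(r\wedge id_T)\circ\phi^{-1}=\sigma_2^{-1}\circ\psi_2$ together with $\sigma_1=\psi_2\circ\psi_1$ is precisely the commutativity the paper records.

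The gap is in your vanishing argument for $\delta_T'$ on the second summand. From the cofiber sequence $\Gm\xrightarrow{0}\Gm\wedge T\xrightarrow{f}\Gm_+\wedge T$ (with $f=\phi^{-1}\psi_1\sigma_1^{-1}\sigma_2$) you deduce that $f$ is a wedge inclusion in \emph{some} splitting $\Gm_+\wedge T\simeq(\Gm\wedge T)\vee\Sigma\Gm$. But the projection $q\wedge id$ you then compose with is the $T$-projection in the \emph{different} splitting coming from $\Gm_+\simeq\Gm\vee S^0$. Both splittings share the retraction $r\wedge id$, but need not share the section; two sections of $r\wedge id$ differ by a map $\Gm\wedge T\to T$, which is exactly what $(q\wedge id)\circ f$ measures. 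So the cofiber argument does not by itself force $(q\wedge id)\circ f=0$.

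The paper closes this more directly and avoids any splitting comparison. Writing $s_1=q\wedge id\colon\Gm_+\wedge T\to T$, $(x,y)\mapsto y$, one has $s_1=s_2\phi$ for the analogous projection $s_2\colon(\A^2-\{0\})/(\A^1\times\Gm)\to T$. Then $s_2\psi_1\colon(\A^2-\{0\},(1,1))\to T$, $(x,y)\mapsto y$, is null in $H_\bullet(k)$ since it extends over the contractible space $\A^2$. Hence $\delta_T'\,s_1^\pi=\sigma_2^\pi(\sigma_1^\pi)^{-1}\psi_1^\pi s_2^\pi=0$, giving $\ker\delta_T=s_1^\pi(\pi^{*,*}(T))\subset\ker\delta_T'$. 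The paper then finishes with the elementary observation that two maps with a common section $(r\wedge id)^\pi$ and with $\ker\delta_T\subset\ker\delta_T'$ must agree. Replacing your cofiber paragraph with this one-line nullhomotopy fixes the argument.
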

\begin{proof}
Put $\delta_T'=\sigma_2^\pi(\sigma_1^\pi)^{-1}\psi_1^\pi(\phi^\pi)^{-1}$. By Lemma~\ref{lem_cohspit} we have
\[
\delta_T(r\wedge \id)^\pi=\Sigma_T\delta\Sigma_T^{-1}(r\wedge \id)^\pi=\Sigma_T\delta r^\pi\Sigma_T^{-1}=\id.
\]
On the other hand, commutativity of the diagram 
\[
\xymatrix{
(\A^2-\{0\})/(\A^1\times \Gm) \ar[dr]^(0.6){\psi_2} & & \Gm_+\wedge T \ar[ll]_(0.4){\phi}^(0.4){\simeq} \ar[d]^{r\wedge \id}\\
(\A^2-\{0\},(1,1)) \ar[u]^{\psi_1} \ar[r]^(0.65){\sigma_1}_(0.65){\simeq} & Y  & \Gm\wedge T  \ar[l]_(0.55){\sigma_2}^(0.55){\simeq} 
}
\]
implies $(r\wedge \id)\phi^{-1}\psi_1\sigma_1^{-1}\sigma_2 =\id$ and
\[
\delta_T'(r\wedge \id)^\pi=\sigma_2^\pi(\sigma_1^\pi)^{-1}\psi_1^\pi(\phi^\pi)^{-1}(r\wedge \id)^\pi=\id.
\]
At last, set $s_1\colon \Gm_+\wedge T\to T$  and $s_2\colon (\A^2-\{0\})/(\A^1\times \Gm) \to T$ for the projections $(x,y)\mapsto y$ and note that $s_2\phi=s_1$. Recall that the localization sequence for the embedding $\{0\} \to \A^1$ admits a splitting yielding 
\begin{multline*}
\ker \delta_T=\Sigma_T \ker\partial=\Sigma_T(\operatorname{Im}(\pi^{[n]}_{\hphantom{[}i}(\A^1_+)\to \pi^{[n]}_{\hphantom{[}i}(\Gm_+)))=\Sigma_T(\operatorname{Im}(\pi^{[n]}_{\hphantom{[}i}(\pt_+)\to \pi^{[n]}_{\hphantom{[}i}(\Gm_+)))=\\=s_1^\pi(\pi^{[n]}_{\hphantom{[}i}(T)).
\end{multline*}
We have $s_2\psi_1=0$ in the homotopy category, so $\delta_T'\phi^\pi s_2^\pi=0$. Thus we obtain
\[
\ker \delta_T=s_1^\pi(\pi^{[n]}_{\hphantom{[}i}(T))=\phi^\pi s_2^\pi(\pi^{[n]}_{\hphantom{[}i}(T))\subset \ker \delta_T'.
\]

To sum up, there are two homomorphisms $\delta_T,\delta_T'$ with the same splitting and $\ker \delta_T\subset \ker \delta_T'$. Then for every $a\in\pi^{[n]}_{\hphantom{[}i}(\Gm_+\wedge T)$ we have
\[
\delta_T(a-(r\wedge \id)^{\pi}\delta_T(a))=0=\delta_T'(a-(r\wedge \id)^{\pi}\delta_T(a))=\delta_T'(a)-\delta_T(a),
\]
yielding $\delta_T=\delta_T'$.
\end{proof}

\begin{proof}[Proof of Theorem~\ref{thm_etacoh}]
Consider the morphism $m\colon \Gm_+\wedge T\to \Gm_+\wedge T$ given by $m(t,x)=(t,-x/t)$. Unraveling the definitions, we need to show
\[
\partial\Sigma_T^{-1}(m^\pi\Sigma_T(1))=\Sigma_T\Sigma^{-1,-1}\Sigma_T^{-1}(\sigma^{\pi})^{-1}(\Sigma^{\infty}_T (\vartheta \circ H)).
\]
The element $m^\pi\Sigma_T1\in\pi^{[1]}_{\hphantom{[}0}(\Gm_+\wedge T)$ could be represented by the $\Sigma^\infty_T$-suspension of the composition
\[
\Gm_+\wedge T\xrightarrow{\widetilde{H}_1} T\xrightarrow{\rho} S^{2,1},
\]
with $\widetilde{H}_1$ given by $\widetilde{H}_1(t,x)=-x/t$. We can rewrite this composition as $\rho\widetilde{H}_1=\rho\vartheta_2^{-1}H_1$ with $H_1\colon \Gm_+\wedge T\to \PP^1/\A^1$ given by $H_1(t,x)=[x:-t]$. Hence on the left-hand side we have $\partial\Sigma_T^{-1}(\Sigma^{\infty}_T(\rho\circ\vartheta_2^{-1}\circ H_1))$.

Unraveling the definition of $\delta_T$, by Lemma~\ref{lem_concoh} we obtain
\[
\Sigma_T\Sigma^{-1,0}(\rho^\pi)^{-1}\partial\Sigma_T^{-1}=(\sigma^\pi)^{-1}\psi_1^{\pi}(\phi^\pi)^{-1},\quad \partial\Sigma_T^{-1}=\rho^\pi\Sigma^{1,0}\Sigma_T^{-1}(\sigma^\pi)^{-1}\psi_1^{\pi}(\phi^\pi)^{-1}.
\]
Thus on the left-hand side we have
\[
\rho^\pi\Sigma^{1,0}\Sigma_T^{-1}(\sigma^\pi)^{-1}\psi_1^{\pi}(\phi^\pi)^{-1}(\Sigma^{\infty}_T(\rho\circ\vartheta_2^{-1}\circ H_1)).
\]
Consider the following commutative diagram.
\[
\xymatrix{
\PP^1 \ar[r]^{\vartheta_1}_{\simeq} & \PP^1/\A^1 \ar[r]^{\tau}_{=}& \PP^1/\A^1 \\
(\A^2-\{0\},(1,1)) \ar[r]^(0.45){\psi_1} \ar[u]^{H} \ar[ur]^{H_3} & (\A^2-\{0\})/(\A^1\times \Gm)  \ar[ur]^{H_2}  &  \Gm_+\wedge T \ar[l]_(0.32){\phi}^(0.32){\simeq}  \ar[u]_{H_1}
}
\]
Here $H_1$ and $H_2$ are given by $(x,y)\mapsto [y:-x]$, $H$ and $H_3$ are given by $(x,y)\mapsto [x:y]$, $\PP^1\cong \PP^1/\A^1$ is the canonical isomorphism and $\tau$ is given by $[x,y]\mapsto [y,-x]$. Recall that in the homotopy category $\tau=\id$, thus we have
\[
\psi_1^{\pi}(\phi^\pi)^{-1}(\Sigma^{\infty}_T(\rho\vartheta_2^{-1}H_1))=\Sigma^{\infty}_T(\rho\vartheta_2^{-1}H_1 \phi^{-1} \psi_1)=\Sigma^\infty_T (\rho\vartheta_2^{-1}\vartheta_1H)=\Sigma_T^\infty (\vartheta H).
\]

Summing up the above considerations, it is left to show that
\[
\rho^\pi\Sigma^{1,0}\Sigma_T^{-1}(\sigma^\pi)^{-1}(\Sigma^\infty_T (\vartheta H))=\Sigma_T\Sigma^{-1,-1}\Sigma_T^{-1}(\sigma^{\pi})^{-1}(\Sigma^{\infty}_T(\vartheta H)),
\]
which follows from the definition of $\Sigma_T$.

\end{proof}

\section{Special linear and symplectic orientations.}
In this section we briefly recall the definitions of different types of orientations. A detailed exposition can be found in \cite{PW1,PW2}. Mostly for the case of uniformity, only the case of a representable cohomology theory is treated. Fix for this section a commutative ring $T$-spectrum $A\in\SH(k)$ representing a ring cohomology theory $A^{[\star]}_{\hphantom{[}*}(-)$.

Roughly speaking, an orientation on a cohomology theory is a rule that fixes for every vector bundle $E$ over every smooth variety $X$ a natural Thom class $\thc (E)\in A^{[\star]}_{\hphantom{[}*}(\Th(E))$ such that
\[
-\cup \thc (E)\colon A^{[\star]}_{\hphantom{[}*}(X)\to A^{[\star+ \rank E]}_{\hphantom{[}*}(\Th(E))
\]
is an isomorphism \cite{PS}. Particular types of orientation fix such classes only for vector bundles with additional structure, for example for symplectic or for special linear ones. Note that these classes usually do depend on the additional structure, i.e. for the same vector bundle with different symplectic forms one could have different Thom classes.

\begin{definition}
A \textit{special linear bundle} over a variety $X$ is a pair $(E, \lambda)$ with $E\to X$ a vector bundle and $\lambda \colon \det E\xrightarrow{\simeq}\triv_X$ an isomorphism of line bundles. Here $\triv_X$ denotes the trivial line bundle over $X$.
\end{definition}

\begin{definition}
\label{def_SLorient}
A \textit{(normalized) special linear orientation} on a ring cohomology theory $A^{[\star]}_{\hphantom{[}*}(-)$ is a rule which assigns to every special linear bundle $(E,\lambda)$ of rank $n$ over a smooth variety $X$ a class $\thc(E,\lambda)\in A^{[n]}_{\hphantom{[}0}(\Th(E))$ satisfying the following conditions \cite[Definition~5.1]{PW3}:
\begin{enumerate}
\item
for an isomorphism $f\colon (E,\lambda)\xrightarrow{\simeq} (E',\lambda')$ (i.e. an isomorphism $f\colon E\to E'$ satisfying $\lambda=\det f \circ \lambda'$) we have $\thc(E,\lambda)=f^A\thc(E',\lambda')$;
\item
for a morphism of smooth varieties $r\colon X'\to X$ we have $r^A\thc(E,\lambda)=\thc(r^*(E,\lambda))$;
\item
homomorphisms $-\cup \thc(E,\lambda)\colon A^{[\star]}_{\hphantom{[}*}(X)\to A^{[\star+n]}_{\hphantom{[}*}(\Th(E))$
are isomorphisms;
\item
we have
\[
\thc(E_1\oplus E_2, \lambda_1\otimes \lambda_2)=q_1^A\thc(E_1,\lambda_1)\cup q_2^A\thc(E_2,\lambda_2),
\]
where $q_1,q_2$ are projections from $E_1\oplus E_2$ onto its summands;
\item
for the zero bundle $\mathbf{0}\to \pt$ over the point we have $\thc(\mathbf{0})=1_\pt\in A^{[0]}_{\hphantom{[}0}(\pt)$;
\item
for the trivial special linear line bundle $\triv_\pt$ over the point we have
\[
\thc(\triv_\pt,\id)=\Sigma_T 1_\pt\in A^{[1]}_{\hphantom{[}0}(T).
\]
\end{enumerate}
The class $\thc(E,\lambda)$ is the \textit{Thom class} of the special linear bundle. We call a ring cohomology theory with a normalized special linear orientation an \textit{$\SL$-oriented cohomology theory}.
\end{definition}

One can give an analogous definition of the symplectic orientation on a cohomology theory. 

\begin{definition}
A \textit{(normalized) symplectic orientation}  on a ring cohomology theory $A^{[\star]}_{\hphantom{[}*}(-)$ is a rule which assigns to every symplectic bundle $(E,\phi)$ of rank $n$ over a smooth variety $X$ a class $\thc(E,\phi)\in A^{[n]}_{\hphantom{[}0}(\Th(E))$ satisfying the following conditions \cite[Definition~14.2]{PW1}:
\begin{enumerate}
\item
for an isomorphism $f\colon (E,\phi)\xrightarrow{\simeq} (E',\phi')$ we have $\thc(E,\phi)=f^A\thc(E',\phi')$;
\item
for a morphism of smooth varieties $r\colon X'\to X$ we have $r^A\thc(E,\phi)=\thc(r^*(E,\phi))$;
\item
homomorphisms
$-\cup \thc(E,\phi)\colon A^{[\star]}_{\hphantom{[}*}(X)\to A^{[\star+n]}_{\hphantom{[}*}(\Th(E))$
are isomorphisms;
\item
we have
\[
\thc(E_1\oplus E_2, \phi_1\perp \phi_2)=q_1^A\thc(E_1,\phi_1)\cup q_2^A\thc(E_2,\phi_2),
\]
where $q_1,q_2$ are projections from $E_1\oplus E_2$ onto its summands;
\item
for the zero bundle $\mathbf{0}\to \pt$ over the point we have $\thc(\mathbf{0})=1_\pt\in A^{[0]}_{\hphantom{[}0}(\pt)$;
\item 
for the hyperbolic bundle $(H,\phi)=\left(\triv_\pt\oplus \triv_\pt, \begin{pmatrix} 0 & 1 \\ -1 & 0 \end{pmatrix}\right)$ of rank two over the point we have 
\[
\thc(H,\phi)=\Sigma^2_T 1_\pt\in A^{[2]}_{\hphantom{[}0}(T\wedge T). 
\]
\end{enumerate}
The class $\thc(E,\phi)$ is the \textit{Thom class} of the symplectic bundle.
\end{definition}


\begin{lemma}
\label{lem_symbol_via_Thom}
Let $A\in \SH(k)$ be a commutative ring $T$-spectrum representing an $\SL$-oriented cohomology theory. Let $X$ be a smooth variety and $f\in k[X]^*$ be an invertible function. Put $f_T\colon\triv_X \to \triv_X$ for the isomorphism given by $f_T(x,t)=(x,f(x)t)$. Then
\[
\langle f\rangle_A\cup \thc(\triv_X,\id)=\thc(\triv_X,f_T).
\]
\end{lemma}
\begin{proof}
Functoriality of Thom classes together with the normalization property yields $\thc(\triv_X,\id)=\Sigma_T 1_X$ and $\thc(\triv_X,f_T)=f_T^A(\Sigma_T 1_X)$. Then
\[
\langle f\rangle_A\cup \thc(\triv_X,\id)=\Sigma^{-1}_T(f^A_T(\Sigma_T 1_X))\cup \Sigma_T 1_X = f_T^A(\Sigma_T 1_X)=\thc(\triv_X,f_T). \qedhere
\]
\end{proof}


Every symplectic bundle $(E,\phi)$ has a trivialization of the determinant given by Pfaffian and can be treated as a special linear bundle $(E,\lambda)$. More precisely, for a trivialized symplectic bundle $(\triv_X^{\oplus 2n}, \phi)$, the Pfaffian $\operatorname{Pf}(\phi)$ can be considered as an isomorphism $\det \triv_X^{\oplus n}=\triv_X\xrightarrow{\simeq} \triv_X$. Using this isomorphism locally and applying the well-known property $\operatorname{Pf}(B^TAB)=\det(B)\operatorname{Pf}(A)$ which, in particular, yields that these isomorphisms do not depend on the choice of trivialization, we obtain the desired isomorphism $\lambda\colon\det E\xrightarrow{\simeq} \triv_X$. Thus a special linear orientation on a ring cohomology theory induces a symplectic one, and a general orientation induces a special linear one. Hence one has a variety of examples of $\SL$-oriented cohomology theories arising from the oriented ones. Typical examples of $\SL$-oriented but not oriented cohomology theories are hermitian $K$-theory introduced by Schlichting \cite{Sch10a,Sch12} and derived Witt groups defined by Balmer \cite{Bal99,Bal05}, we recall the related constructions in the next section. Further examples are given by the algebraic cobordism $\MSL$ and $\MSp$ \cite{PW3}. We briefly recall some definitions related to the spectrum $\MSL$.

\begin{definition}
Let $E(n,m)$ be the tautological vector bundle of rank $n$ over the Grassmannian $\Gr(n,m)$. The special linear Grassmannian is the complement to the zero section of $\det E(n,m)$,
\[
\SGr(n,m)=(\det E(n,m))^0.
\]
For the canonical projection $p\colon \SGr(n,m)\to \Gr(n,m)$ equip 
\[
\Tc(n,m)=p^*E(n,m)
\] 
with the obvious trivialization $\lambda\colon \det \Tc(n,m)\xrightarrow{\simeq} \triv_{\SGr(n,m)}$ and refer to it as the \textit{tautological special linear bundle of rank $n$ over $\SGr(n,m)$}. Let $\Th(n,m)=\Th(\Tc(n,m))$ be its Thom space. For the tower of the natural monomorphisms 
\[
\hdots\to \Th(n,m)\to \Th(n,m+1) \to \Th(n,m+2)\to\hdots
\]
denote
\[
\MSL_n=\varinjlim_{m\in \mathbb{N}}\Th(n,m).
\]
In order to write down the bonding maps and the monoid structure it is more convenient to work in the category of symmetric $T^2$-spectra, see \cite{PW3} for the details. Sometimes one prefers to work with the spectrum $\MSL^{fin}$ with $\MSL_n^{fin}=\Th(n,n^2)$. The natural map $\MSL^{fin}\to \MSL$ becomes an isomorphism in $\SH(k)$.

\end{definition}

The cobordism cohomology theories are the universal ones in the sense of the following theorems (see \cite[Theorems 12.2, 13.2, 5.9]{PW3}).

\begin{theorem}
\label{thm_univMSp}
Suppose $A$ is a commutative ring $T$-spectrum equipped with a normalized symplectic orientation on $A^{[\star]}_{\hphantom{[}*}(-)$ given by Thom classes $\thc^A(E,\theta)$. Then there exists a unique morphism $\varphi^{\Sp}\colon \MSp\to A$ in $\SH(k)$ such that for every symplectic bundle $(E,\theta)$ over every smooth variety $X$  one has 
\[
\varphi_{\Th(E)}^{\Sp}(\thc^{\MSp}(E,\theta))=\thc^A(E,\theta).
\]
Moreover, this morphism is a morphism of commutative ring $T$-spectra.
\end{theorem}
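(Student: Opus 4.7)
The plan is to construct $\varphi^{Sp}$ from the Thom classes of the tautological symplectic bundles and then verify spectrum, monoid, and Thom class compatibilities. Analogously to the $MSL$ case recalled above, the space $MSp_{2n}$ is the colimit over $m$ of Thom spaces $Th(\Tc_{Sp}(n,m))$ of the tautological symplectic bundles $\Tc_{Sp}(n,m)$ over the quaternionic Grassmannians $\HGr(n,m)$, and the universal Thom class $th^{MSp}(\Tc_{Sp}(n,m))$ is by definition the class represented by the structure map of the spectrum $MSp$. For each $(n,m)$ I would interpret the Thom class $th^A(\Tc_{Sp}(n,m))\in A^{4n,2n}(Th(\Tc_{Sp}(n,m)))$ as a map to $\Sigma^{4n,2n}A$ in $\Hp(k)$; functoriality under the inclusions $\HGr(n,m)\hookrightarrow\HGr(n,m+1)$ gives compatibility as $m\to\infty$ and hence a map $\varphi^{Sp}_{2n}\colon MSp_{2n}\to\Sigma^{4n,2n}A$. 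Compatibility with the bonding maps $T\wedge T\wedge MSp_{2n}\to MSp_{2(n+1)}$ follows from the multiplicativity $th^A(E\oplus F)=th^A(E)\cup th^A(F)$ combined with the normalization $th^A(H,\phi_H)=\Sigma_T^2 1$, since these maps factor through $Th(\Tc_{Sp}(n,m)\oplus H)$ with $Th(H)$ identified with $T\wedge T$.

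To see that $\varphi^{Sp}$ is a homomorphism of commutative monoids, I would again invoke multiplicativity: the product on $MSp$ arises from the external Whitney sum maps $Th(\Tc_{Sp}(n,m))\wedge Th(\Tc_{Sp}(n',m'))\to Th(\Tc_{Sp}(n+n',m+m'))$ that pull the universal tautological symplectic bundle back to the direct sum, and the identity $th^A(E\oplus F,\theta\oplus\psi)=th^A(E,\theta)\times th^A(F,\psi)$ is precisely what forces the relevant square to commute. The unit axiom reduces to $th^A(\mathbf{0})=1$. The Thom class property for an arbitrary symplectic bundle $(E,\theta)$ over a smooth $X$ is then obtained by writing $(E,\theta)$ as the pullback of $\Tc_{Sp}(n,m)$ along a classifying map $X\to\HGr(n,m)$ for $m$ sufficiently large and using functoriality of both $th^{MSp}$ and $th^A$.

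Uniqueness is forced by the Thom class property itself: any $\psi\colon MSp\to A$ satisfying $\psi_*(th^{MSp}(\Tc_{Sp}(n,m)))=th^A(\Tc_{Sp}(n,m))$ is pinned down on each $MSp_{2n}$, because the class $th^{MSp}(\Tc_{Sp}(n,m))$ is itself (a shift of) the structure map of the spectrum $MSp$, and hence any morphism of spectra sending it to $th^A(\Tc_{Sp}(n,m))$ is determined on the whole level. The main obstacle is not the conceptual scheme sketched above but the strict execution of all compatibilities in the symmetric $T\wedge T$-spectrum model of $MSp$ used in \cite{PW3}; one has to verify that the assembled levelwise maps are equivariant under the symmetric group actions on the smash powers and that the multiplication is respected strictly rather than only up to homotopy. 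This is the place where passing to the finite model $MSp^{fin}$ and mimicking the technical arguments for $MSL$ in \cite{PW3} becomes essential.
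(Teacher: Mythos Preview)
The paper does not prove this theorem; it is quoted verbatim from \cite[Theorems~12.2, 13.2]{PW3} and used as a black box. There is therefore no proof in the present paper to compare your sketch against.

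That said, your outline is the standard one and matches the strategy in \cite{PW3}: build $\varphi^{Sp}$ levelwise from the Thom classes of the tautological symplectic bundles, use multiplicativity and the normalization $th^A(H,\phi_H)=\Sigma_T^2 1$ to verify compatibility with bonding maps, and deduce the monoid structure from Whitney-sum multiplicativity. One point you underplay is uniqueness: for a general spectrum, levelwise determination does not pin down a map in $\SH(k)$ because of a $\varprojlim{}^1$-obstruction. This is precisely why the companion Theorem for $MSL$ in the paper (Theorem~\ref{thm_univMSL}) asserts only existence, not uniqueness, and why Lemma~\ref{lem_smallMult} is needed to recover multiplicativity on small spaces. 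In the $MSp$ case the relevant $\varprojlim{}^1$-term vanishes (this is the content of the symplectic projective bundle theorem and the resulting cellularity of $\HGr(n,m)$, see \cite{PW1,PW3}), and that vanishing is what actually secures uniqueness. Your sketch would need to invoke this explicitly rather than appeal only to the levelwise picture.
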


\begin{theorem}
\label{thm_univMSL}
Suppose $A$ is a commutative ring $T$-spectrum equipped with a normalized special linear orientation on $A^{[\star]}_{\hphantom{[}*}(-)$ given by Thom classes $\thc^A(E,\lambda)$. Then there exists a morphism $\varphi^{\SL}\colon \MSL\to A$ in $\SH(k)$ such that 
\[
\varphi_{\Th(E)}^{\SL}(\thc^{\MSL}(E,\lambda))=\thc^A(E,\lambda)
\]
for every special linear bundle $(E,\lambda)$. It satisfies $\varphi_{\pt}^{\SL}(1^{\MSL}_{\pt})=1^A_{\pt}$.
\end{theorem}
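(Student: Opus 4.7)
The plan is to construct $\varphi^{SL}$ level-wise from the universal Thom classes and then verify the compatibilities required for it to assemble into a morphism of spectra in the symmetric $T^2$-spectrum model used to define $MSL^{fin}$.

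First, for each $n$ and $m$, the special linear orientation on $A$ supplies a Thom class
\[
th^A(\Tc(n,m),\lambda^{taut}) \in A^{2n,n}(Th(n,m)),
\]
which is the same datum as a morphism $t_{n,m}\colon \Sigma^\infty_T Th(n,m) \to \Sigma^{2n,n}A$ in $\SH(k)$. Functoriality of Thom classes with respect to the bundle maps covering the inclusions $SGr(n,m)\hookrightarrow SGr(n,m+1)$ makes the collection compatible in $m$, so in the infinite-dimensional picture one aims to assemble them into $\tau_n\colon \Sigma^\infty_T MSL_n \to \Sigma^{2n,n} A$; equivalently one works at the finite level $m=n^2$ of $MSL^{fin}$.

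Next I would check that the $\tau_n$ respect the structure maps of $MSL^{fin}$. The bonding map $T^2\wedge MSL_n\to MSL_{n+2}$ is built from the canonical isomorphism $Th(\Tc(n,m)\oplus\triv^2)\cong T^2\wedge Th(\Tc(n,m))$ together with a classifying map presenting $\Tc(n,m)\oplus\triv^2$ as a special linear bundle of rank $n+2$. Multiplicativity of the orientation gives
\[
th^A(\Tc(n,m)\oplus\triv^2,\lambda^{taut}\oplus 1) = th^A(\Tc(n,m),\lambda^{taut})\cup th^A(\triv^2,1),
\]
and iterating the rank-one normalization yields $th^A(\triv^2,1)=\Sigma^2_T 1$, which matches exactly the $T^2$-suspension inside the bonding map. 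Combined with functoriality along the classifying map, this is the commutativity of the required square in $\SH(k)$, producing a morphism $\varphi^{SL}\colon MSL^{fin}\to A$. Inverting the isomorphism $MSL^{fin}\to MSL$ gives the desired $\varphi^{SL}\colon MSL\to A$.

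For the Thom-class identity, let $(E,\lambda)$ be a special linear bundle of rank $n$ on a smooth $X$. For $m$ large enough there is a classifying map $f\colon X\to SGr(n,m)$ with $(E,\lambda)\cong f^*(\Tc(n,m),\lambda^{taut})$; functoriality of Thom classes gives $th^A(E,\lambda)=Th(f)^*th^A(\Tc(n,m),\lambda^{taut})$ and the analogous identity for $th^{MSL}$. Since $\varphi^{SL}$ is built to send the universal $MSL$-Thom class to the universal $A$-Thom class, naturality along $Th(f)$ gives the identity for $(E,\lambda)$. Applying this to the zero bundle over $pt$ together with $th(\mathbf{0})=1$ on both sides yields $\varphi^{SL}_{pt}(1_{MSL})=1_A$. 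The main obstacle is the step from commuting-squares-in-$\SH(k)$ to an actual morphism of spectra: this is precisely why Panin and Walter work in the explicit finite-dimensional symmetric $T^2$-spectrum $MSL^{fin}$, where the bonding maps are concrete Thom-space inclusions and the compatibility reduces without ambiguity to multiplicativity and normalization of $th^A$, bypassing the need to construct coherent higher homotopies by hand.
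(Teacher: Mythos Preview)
The paper does not prove this theorem at all: it is quoted verbatim from \cite[Theorem~5.9]{PW3} (see the sentence ``The cobordism cohomology theories are the universal ones in the sense of the following theorems (see \cite[Theorems 12.2, 13.2, 5.9]{PW3})'' immediately preceding the statements). So there is no in-paper proof to compare against.

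That said, your sketch is exactly the outline of the argument in \cite{PW3}: one defines level-wise maps $\Sigma^\infty_T Th(n,n^2)\to \Sigma^{2n,n}A$ from the Thom classes of the tautological bundles, checks compatibility with the $T^{\wedge 2}$-bonding maps using multiplicativity and the normalization $th^A(\triv,1)=\Sigma_T 1$, and passes through the finite model $MSL^{fin}$. Your identification of the delicate point is also correct: a family of squares that commute in $\SH(k)$ does not automatically assemble into a morphism of spectra, and the reason Panin--Walter's argument works is that in the symmetric $T^{\wedge 2}$-model for $MSL^{fin}$ the bonding maps are honest maps of motivic spaces (Thom-space inclusions), so the compatibility can be checked on the nose rather than up to homotopy. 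The verification of $\varphi^{SL}_{pt}(1_{MSL})=1_A$ via the zero bundle and the Thom-class identity via classifying maps is standard and correct. Note, however, that the theorem as stated here asserts only existence, not uniqueness (in contrast to the symplectic case, Theorem~\ref{thm_univMSp}); this is because of a potential $\varprojlim^1$-obstruction, which the paper alludes to right after the statement. Your construction produces one such morphism, which is all that is claimed.
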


There is a $\lim^1$-obstruction for the morphism from Theorem~\ref{thm_univMSL} to be a morphism of commutative ring $T$-spectra. As we will see shortly, this obstruction vanishes for small pointed motivic spaces.

\begin{definition}
A pointed motivic space $Y$ is called \textit{small} if $\Hom_{\SH(k)}(\Sigma^\infty_T Y,-)$ commutes with arbitrary coproducts.
\end{definition}

\begin{lemma}
\label{lem_smallMult}
For a morphism $\varphi^{\SL}$ from Theorem~\ref{thm_univMSL}, small pointed motivic spaces $Y,Y'$ and arbitrary elements $\alpha\in \MSL^{[\star]}_{\hphantom{[}*}(Y),\alpha'\in \MSL^{[\star]}_{\hphantom{[}*}(Y')$ one has 
\[
\varphi^{\SL}_{Y\wedge Y'}(\alpha\times \alpha')=\varphi^{\SL}_{Y}(\alpha)\times \varphi^{\SL}_{Y'}(\alpha').
\]
In particular, $\varphi^{\SL}_Y$ is a homomorphism of $\pi^{[\star]}_{\hphantom{[}*}(\pt)$-algebras.
\end{lemma}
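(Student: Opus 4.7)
The strategy is to use smallness of $Y$ and $Y'$ to reduce the computation to a finite approximation of $MSL$, where Thom classes literally represent cohomology and Theorem~\ref{thm_univMSL} applies directly. The $\lim^1$-obstruction mentioned in the paragraph preceding the lemma only intervenes when one tries to compare the monoid structures globally; once all classes in sight are represented at a fixed finite level $Th(n,m)$, every identity we need is just the statement that $\varphi^{SL}$ sends tautological Thom classes to tautological Thom classes, together with the multiplicativity of Thom classes that is built into the definition of both orientations.

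More concretely, first I would use that $MSL^{fin}\to MSL$ is an equivalence in $\SH(k)$ and invoke smallness of $Y$, together with the fact that $MSL^{fin}$ is a sequential colimit (of $T^2$-spectra) whose $n$-th space is $Th(n,n^2)$. Since $\Hom_{\SH(k)}(\Sigma^\infty Y,-)$ commutes with arbitrary coproducts, and consequently with the sequential homotopy colimit defining $MSL^{fin}$, any class $\alpha\in MSL^{p,q}(Y)$ is represented by a morphism $f\colon \Sigma^\infty Y\to \Sigma^{p-2n,q-n}\Sigma^\infty Th(n,n^2)$ for some sufficiently large $n$, with $\alpha=f^\pi(\Sigma^{p-2n,q-n}th^{MSL}(\Tc(n,n^2)))$. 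I would pick the same $n$ for $\alpha'$ (after perhaps enlarging) and get $f'$ and $\alpha'=f'^\pi(\Sigma^{p'-2n,q'-n}th^{MSL}(\Tc(n,n^2)))$, where we identify $\Tc(n,n^2)$ by the relevant index.

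Next, the multiplication on $MSL^{fin}$ at finite level comes from the canonical map $Th(n,n^2)\wedge Th(n',n'^2)\to Th(n+n',N)$ induced by the external direct sum of special linear bundles $\Tc(n,n^2)\boxplus\Tc(n',n'^2)$. By the multiplicativity axiom of the Thom classes of $MSL$, the image of $th^{MSL}(\Tc(n,n^2)\boxplus \Tc(n',n'^2))$ under pullback along this pairing equals $th^{MSL}(\Tc(n,n^2))\times th^{MSL}(\Tc(n',n'^2))$. Consequently, naturality of the cross-product together with the factorizations of $\alpha$ and $\alpha'$ through $f$ and $f'$ yield
\[
\alpha\times\alpha'=(f\wedge f')^\pi\bigl(\Sigma^{\cdot,\cdot}th^{MSL}(\Tc(n,n^2)\boxplus\Tc(n',n'^2))\bigr).
\]
Applying $\varphi^{SL}_{Y\wedge Y'}$ and using naturality together with Theorem~\ref{thm_univMSL} for the special linear bundle $\Tc(n,n^2)\boxplus\Tc(n',n'^2)$ produces $(f\wedge f')^\pi(\Sigma^{\cdot,\cdot}th^A(\Tc(n,n^2)\boxplus\Tc(n',n'^2)))$, and then the multiplicativity axiom for $th^A$ together with naturality gives back $\varphi^{SL}_Y(\alpha)\times \varphi^{SL}_{Y'}(\alpha')$.

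For the "in particular" part, since $pt$ is small, the main assertion applies with $Y=pt$, and this shows the cross-product (hence the $\pi^{*,*}(pt)$-module structure) is respected once we know $\varphi^{SL}_{pt}\circ u_{MSL}=u_A$ at the level of the unit $\SSp\to A$; this last identity follows from $\varphi^{SL}_{pt}(1_{MSL})=1_A$ given in Theorem~\ref{thm_univMSL} by uniqueness of a morphism $\SSp\to A$ sending $1$ to $1$. The main obstacle is bookkeeping of the shifts $(p-2n,q-n)$ and, more substantively, the choice of $n$ common to both $\alpha$ and $\alpha'$ — this is precisely where smallness is indispensable, as it ensures every representative factors at a finite stage, sidestepping the $\lim^1$-obstruction that prevents $\varphi^{SL}$ from being a monoid map on arbitrary spaces.
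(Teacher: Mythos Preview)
Your proposal is correct and follows essentially the same strategy as the paper: use smallness of $Y,Y'$ together with $MSL\cong MSL^{fin}$ to factor $\alpha,\alpha'$ through finite Thom spaces, then reduce to the multiplicativity of Thom classes on both sides and the fact that $\varphi^{SL}$ carries tautological Thom classes to Thom classes. The paper organizes the same computation slightly differently---it first suspends so that the classes sit in bidegree $(2n,n)$ and represents them by unstable maps into $Th(n+i,(n+i)^2)$, then packages the comparison of products into a single commutative diagram in $\SH(k)$ relating $\mu_{MSL}$, $\mu_A$, and the various $th$'s---but the content is identical to what you wrote.
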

\begin{proof}
Suspending one may pass to 
\[
\Sigma^{p,q}\alpha\in \MSL^{[n]}_{\hphantom{[}0}(Y\wedge S^{p,q}), \quad\Sigma^{p',q'}\alpha'\in \MSL^{[n']}_{\hphantom{[}0}(Y'\wedge S^{p',q'}).
\]
Since $\MSL\cong \MSL^{fin}$ in $\SH(k)$ we have
\begin{multline*}
\MSL^{[n]}_{\hphantom{[}0}(Y\wedge S^{p,q})= \Hom_{\SH(k)}(\Sigma_T^\infty (Y\wedge S^{p,q}),\Sigma^{2n,n} \MSL)\cong \\
\cong \Hom_{\SH(k)}(\Sigma_T^\infty (Y\wedge S^{p,q}),\Sigma^{2n,n}\MSL^{fin})
\cong \Hom_{\SH(k)}(\Sigma^{-2n,-n}\Sigma_T^\infty (Y\wedge S^{p,q}),\MSL^{fin}).
\end{multline*}
By \cite[Theorem 5.2]{V} there is a canonical isomorphism
\begin{multline*}
\Hom_{\SH(k)}(\Sigma^{-2n,-n}\Sigma_T^\infty (Y\wedge S^{p,q}),\MSL^{fin})
\cong\varinjlim_{l\in \mathbb{N}} \Hom_{\Hp(k)}(Y\wedge S^{p,q}\wedge T^{\wedge l-n}, \Th(l,l^2))=\\
=\varinjlim_{i\in \mathbb{N}} \Hom_{\Hp(k)}(Y\wedge S^{p,q}\wedge T^{\wedge i}, \Th(n+i,(n+i)^2)).
\end{multline*}
Note that going through the above isomorphisms we obtain a map 
\[
\Hom_{\Hp(k)}(Y\wedge S^{p,q}\wedge T^{\wedge i}, \Th(n+i,(n+i)^2)) \to \MSL^{[n]}_{\hphantom{[}0}(Y\wedge S^{p,q})
\]
given by 
\[
f\mapsto \Sigma^{-i}_T(\thc^{MSL}(\Tc(n+i,(n+i)^2))\circ \Sigma_T^{\infty} f)
\]
with the Thom class $\thc^{MSL}(\Tc(n+i,(n+i)^2))$ arising as the tautological morphism 
\[
\Sigma^\infty_T \Th(n+i,(n+i)^2)\to \Sigma^{2(n+i),n+i} \MSL.
\]
Thus there exists some $i\in \mathbb{N}$ and 
\[
f \in \Hom_{\Hp(k)} (Y\wedge S^{p,q}\wedge T^{\wedge i},  \Th(n+i,(n+i)^2))
\] 
such that
\[
\Sigma_T^{i}\Sigma^{p,q}\alpha=f^{\MSL}\thc^{\MSL}(\Tc(n+i,(n+i)^2)).
\]
By a similar argument we obtain that
\[
\Sigma_T^{i'}\Sigma^{p',q'}\alpha'=f'^{\MSL}\thc^{\MSL}(\Tc(n'+i',(n'+i')^2))
\]
for some $i',p',q'\in \mathbb{N}$ and $f\in \Hom_{\Hp (k)}(Y'\wedge S^{p',q'}\wedge T^{\wedge i'},\Th(n'+i',(n'+i')^2))$. Set $m=n+i, m'=n'+i'$ and consider the following diagram.

\[
\xymatrix{
\Sigma_T^{-m}\Sigma_T^{\infty}\MSL_m^{fin}\wedge \Sigma_T^{-m'}\Sigma_T^{\infty}\MSL_{m'}^{fin} \ar[rr]^(0.65){\thc^{A}_m\wedge \thc^{A}_{m'}} \ar[dd]_{\mu_{m,m'}} \ar[dr]^(0.6){\quad \thc^{\MSL}_m\wedge \thc^{\MSL}_{m'}} & & A\wedge A \ar[dd]^{\mu_{A}} \\
&  \MSL\wedge \MSL \ar[ur]_{\varphi^{\SL}\wedge \varphi^{\SL}} \ar[dd]^(0.3){\mu_{\MSL}} & \\
\Sigma_T^{-m-m'}\Sigma_T^{\infty}\MSL_{m+m'}^{fin} \ar[rr]^{\thc^{A}_{m+m'}} \ar[dr]_{\thc^{\MSL}_{m+m'}} &  & A  \\
 & \MSL\ar[ur]_{\varphi^{\SL}} &
}
\]
Here $\mu_{m,m'}$ is induced by the canonical embedding
\[
\Tc(m,m^2)\times \Tc(m',m'^2)\to \Tc(m+m',(m+m')^2)
\]
and all the maps $\thc$ are given by the corresponding Thom classes, for example $\thc_{m}^{A}$ is induced by 
\[
\thc^{A}(\Tc(m,m^2))\in A^{[m]}_{\hphantom{[}0}(\Th(m,m^2))\cong \Hom_{\SH(k)}(\Sigma_T^{-m}\Sigma^\infty_T \MSL^{fin}_m,A).
\]
The left and the back squares commute by the multiplicativity property of the Thom classes. The triangles commute by the choice of $\varphi^{\SL}$. Hence
\begin{multline*}
\mu_{A}(\varphi^{\SL}\wedge\varphi^{\SL})(\thc^{\MSL}_m\wedge \thc^{\MSL}_{m'})(\Sigma_T^{-m}(\Sigma_T^\infty f)\wedge \Sigma_T^{-m'}(\Sigma_T^\infty f'))=\\
=\varphi^{\SL} \mu_{\MSL}(\thc^{\MSL}_m\wedge \thc^{\MSL}_{m'})(\Sigma_T^{-m}(\Sigma_T^\infty f)\wedge \Sigma_T^{-m'}(\Sigma_T^\infty f')),
\end{multline*}
and, omitting indices at $\varphi^{\SL}$,
\[
\varphi^{\SL}(\Sigma_T^{i}\Sigma^{p,q}\alpha\times \Sigma_T^{i'}\Sigma^{p',q'}\alpha')=\varphi^{\SL}(\Sigma_T^{i}\Sigma^{p,q}\alpha)\times \varphi^{\SL}(\Sigma_T^{i'}\Sigma^{p',q'}\alpha').
\]
The morphisms $\varphi^{\SL}_Y$ and $\varphi^{\SL}_{Y'}$ respect suspension isomorphisms, so the claim follows from the above equality.

\end{proof}

\section{Schlichting's hermitian $K$-theory and Witt groups.}

In this section we recall the basic definitions related to hermitian $K$-groups (also known as higher Grothendieck-Witt groups) introduced by Schlichting \cite{Sch10a,Sch10b}. In our exposition we mainly follow \cite{Sch12} which employs the setting of dg-categories and we refer the reader to loc. cit. for the details. Recall that we are working over a field $k$ of characteristic different from $2$, thus $\tfrac{1}{2}\in \Gamma(X,\mathcal{O}_X)$ for every $X$.

For a smooth variety $X$ consider the category $\Vect(X)$ of vector bundles over $X$ and let $\mathrm{sPerf}(X)=\mathrm{Ch}^b(\Vect(X))$ be the dg-category of bounded chain complexes in $\Vect(X)$. We equip $\mathrm{sPerf}(X)$ with weak equivalences given by quasi-isomorphisms and duality consisting of the functor $ \mathcal{E}\mapsto \mathcal{E}^\vee=\Hom^\bullet_{\mathcal{O}_X}(\mathcal{E},\mathcal{O}_X)$ and the canonical double dual identification $\mathrm{can}^X_\mathcal{E}\colon \mathcal{E}\xrightarrow{\simeq} \mathcal{E}^{\vee\vee}$.

Let $Z\subset X$ be a closed subset of a smooth variety $X$ with an open complement $U=X-Z$. We denote by $\mathrm{sPerf}_Z(X)$ the full dg-subcategory of $\mathrm{sPerf}(X)$ consisting of the complexes supported on $Z$ (acyclic on $U$). This category inherits the weak equivalences and duality from $\mathrm{sPerf}(X)$.

In \cite[Definition 5.4]{Sch12} Schlichting defines the Grothendieck-Witt spectrum of a dg-category with weak equivalences and duality. For the considered categories of chain complexes we denote the corresponding spectra
\[
\KO(X)=\GW(\mathrm{sPerf}(X)), \quad \KO(X,U)=\GW(\mathrm{sPerf}_Z(X)).
\]
More generally we write 
\[
\KO^{[n]}(X)=\GW^{[n]}(\mathrm{sPerf}(X)), \quad \KO^{[n]}(X,U)=\GW^{[n]}(\mathrm{sPerf}_Z(X))
\]
for the Grothendieck-Witt spectra for the $n$-th shifted duality and 
\[
\KO^{[n]}_{\hphantom{[}i}(X)=\pi_i(\GW^{[n]}(\mathrm{sPerf}(X)),\quad \KO^{[n]}_{\hphantom{[}i}(X,U)=\pi_i(\GW^{[n]}(\mathrm{sPerf}_Z(X))
\]
for its homotopy groups. The latter groups are referred to as \textit{hermitian $K$-groups}.

There is a homotopy fibration sequence \cite[Theorem 9.5]{Sch12}
\[
\KO^{[n]}(X,U)\to \KO^{[n]}(X) \to \KO^{[n]}(U)	 
\]
and therefore long exact sequences
\[
\hdots \to \KO^{[n]}_{\hphantom{[}i}(X,U)\to \KO^{[n]}_{\hphantom{[}i}(X) \to \KO^{[n]}_{\hphantom{[}i}(U)	\xrightarrow{\partial} \KO^{[n]}_{i-1}(X,U)\to \hdots
\]

Spectra $\KO^{[n]}(X)$ and $\KO^{[n]}(X,U)$ are far from being connected. For $i<0$ groups $\KO^{[n]}_{\hphantom{[}i}(X)$ and $\KO^{[n]}_{\hphantom{[}i}(X,U)$ can be identified with the derived Witt groups of Balmer \cite{Bal99} which are defined as follows. Let $\mathrm{D}^b(\Vect(X))$ and $\mathrm{D}^b_Z(\Vect(X))$ be the derived categories of $\mathrm{Ch}^b(\Vect(X))$ and $\mathrm{Ch}^b_Z(\Vect(X))$ respectively. Equip these categories with the dualities consisting of the functor $ (-)^\vee$ and the canonical double dual identification $\mathrm{can}^X_\mathcal{E}$ as above. We denote by $\mathrm{D}^b(\Vect(X))[n]$ and  $\mathrm{D}^b_Z(\Vect(X))[n]$ the same derived categories but with $n$-th shifted dualities.

A symmetric object for a triangulated category $C$ with a duality is a pair $(A,\alpha)$ of $A\in \operatorname{Ob}(C)$ and a morphism $\alpha\colon A\to A^\vee$ agreeing with the duality. A symmetric space $(A,\alpha)$ is a symmetric object with $\alpha$ being an isomorphism. There are obvious notions of the isomorphism of symmetric spaces and of the orthogonal sum $(A,\alpha)\perp (B,\beta)$ of symmetric spaces. For every symmetric object $(A,\alpha)$ there exists a canonical symmetric space $\Cone(A,\alpha)$ for the $1$-st shifted duality, in particular, for every object $A\in \operatorname{Ob}(C)$ there is a hyperbolic symmetric space $\operatorname{H}(A)=\Cone(A,0)$. The Grothendieck-Witt group $\GW(C)$ of a small triangulated category with duality is the quotient of the free abelian group on the isomorphism classes of symmetric spaces by the relations $[(A,\alpha)\perp (B,\beta)]=[(A,\alpha)]+[(B,\beta)]$ and $\Cone(A,\alpha)=\operatorname{H}(A)$, where for the second class of relations we regard $A$ as an object of the triangulated category with $(-1)$-st shifted duality. The Witt group $\W(C)$ is the quotient of the Grothendieck-Witt group by the subgroup generated by the classes of the hyperbolic spaces.

We write 
\[
\GW^{n}(X)=\GW(\mathrm{D}^b(\Vect(X))[n]),\quad \GW^{n}(X,U)=\GW(\mathrm{D}^b_Z(\Vect(X))[n]),
\]
\[
\W^{n}(X)=\W(\mathrm{D}^b(\Vect(X))[n]),\quad \W^{n}(X,U)=\W(\mathrm{D}^b_Z(\Vect(X))[n])
\]
for the corresponding Grothendieck-Witt and Witt groups arising from the derived category of the vector bundles over a smooth variety $X$. For $i<0$ there are natural identifications \cite[Proposition 5.6, 6.3]{Sch12} 
\begin{gather*}
\Theta\colon \KO^{[n]}_{\hphantom{[}0}(X)\xrightarrow{\simeq} \GW^n(X), \quad \Theta\colon \KO^{[n]}_{\hphantom{[}0}(X,U)\xrightarrow{\simeq} \GW^n(X,U),\\
\Theta\colon \KO^{[n]}_{\hphantom{[}i}(X)\xrightarrow{\simeq} \W^{n-i}(X),\quad \Theta\colon \KO^{[n]}_{\hphantom{[}i}(X,U)\xrightarrow{\simeq} \W^{n-i}(X,U).
\end{gather*}

For a smooth variety $X$ and an invertible function $s\in k[X]^*$ we write $\langle s \rangle$ for the class 
\[
\langle s \rangle = [(\triv_X,s)]\in \GW^0(X)
\] 
and 
\[
\langle s \rangle_{\KO}= \Theta^{-1}(\langle s \rangle)\in \KO^{[0]}_0(X)
\]
for the corresponding element in hermitian $K$-theory. Let $X$ be a smooth variety and $U,V\subset X$ be open subsets. Tensor product of complexes gives rise to a pairing \cite[Section 5]{Sch12}
\[
\multko \colon \KO^{[n]}_{\hphantom{[}i}(X,U)\times \KO^{[m]}_{\hphantom{[}j} (X,V)\to \KO^{[n+m]}_{\hphantom{[}i+j}(X, U\cup V),
\]
which is graded-commutative in the following sense:
\[
a\multko b = (-1)^{ij}\langle -1 \rangle_{\KO}^{mn} \multko b\multko a
\]
for $a\in \KO^{[n]}_{\hphantom{[}i}(X,U)$ and $b\in \KO^{[m]}_{\hphantom{[}j} (X,V)$. This pairing coincides with the partially defined one of Panin and Walter \cite[4.6a, 4.6b]{PW2} which in turn is a slight generalization of the pairing constructed by Gille and Nenashev \cite{GN} for the derived Witt groups. We briefly recall that one can define Gille-Nenashev paring
\[
\boxtimes\colon \GW^n (X,U)\times \GW^m (X,V)\to \GW^{n+m}  (X, U\cup V)
\]
via
\[
[(A_\bullet,\alpha)]\boxtimes [(B_\bullet,\beta)]= [(A_\bullet\otimes B_\bullet, \alpha\widetilde{\otimes} \beta)]\in \GW^{n+m}(X, U\cup V),
\]
where $\alpha\widetilde{\otimes} \beta$ equals to $\alpha\otimes\beta$ up to some signs and identification 
\[
A_\bullet^\vee[n]\otimes B_\bullet^\vee[m]=(A_\bullet\otimes B_\bullet)^\vee[n+m].
\]
This pairing respects hyperbolic spaces and induces a pairing
\[
\boxtimes\colon \W^n (X,U)\times \W^m (X,V)\to \W^{n+m}  (X, U\cup V).
\]
Recall that for $a\in \W^n (X,U), b\in \W^m (X,V)$ one has $a\boxtimes b= (-1)^{nm} b\boxtimes a$.

The identification $\Theta\colon \KO^{[n]}_{\hphantom{[}i}\xrightarrow{\simeq} \W^{n-i}$, $i\le 0$, respects multiplication only up to a certain sign specified in the following lemma.

\begin{lemma}\label{lem_KOGW}
Let $X$ be a smooth variety and $U,V\subset X$ be open subsets. Then for $n,m\in \Z$, $i,j\le 0$ the following diagram commutes
\[
\xymatrix{
\KO^{[n]}_{\hphantom{[}i}(X,U) \times \KO^{[m]}_{\hphantom{[}j}(X,V) \ar[r]^(0.55){\multko} \ar[d]^{\Theta\times \Theta}_{\cong} & \KO^{[n+m]}_{\hphantom{[}i+j}(X,U\cup V) \ar[d]^{(-1 )^{(m-j)i} \Theta}_{\cong} \\
\W^{n-i}(X,U) \times \W^{m-j}(X,V) \ar[r]^(0.55){\boxtimes} & \W^{n+m-i-j}(X,U\cup V),
}
\]
i.e. for $a\in \KO^{[n]}_{\hphantom{[}i}(X,U)$, $b\in \KO^{[m]}_{\hphantom{[}j}(X,V)$ one has
\[
\Theta(a\multko b)=( -1 )^{(m-j)i}  \Theta(a) \boxtimes \Theta(b).
\]
\end{lemma}
\begin{proof}
The case of $i=j=0$ follows from \cite[Section 5]{Sch12}. Recall that by \cite[Proposition 6.3]{Sch12} for $i=-1$ isomorphisms $\Theta$ are given by an exact sequence
\[
\mathrm{K}_0 \xrightarrow{H} \KO^{[n]}_{\hphantom{[}0} \xrightarrow{\multko \eta_S} \KO^{[n-1]}_{\hphantom{[}-1} \to 0
\]
combined with the identification $\Theta\colon  \KO^{[n]}_{\hphantom{[}0} \xrightarrow{\simeq} \GW^{n}$. Here $H$ is the hyperbolic map and $\eta_S$ is a certain element of $\KO^{[-1]}_{\hphantom{[}-1}(\pt)$. For $i<-1$ one uses the case of $i=-1$ and isomorphisms $\multko \eta_S\colon \KO^{[n+1]}_{\hphantom{[}i+1}\xrightarrow{\simeq} \KO^{[n]}_{\hphantom{[}i}$. Hence there exist $\widetilde{a}\in \KO^{[n-i]}_{\hphantom{[}0}(X,U)$ and $\widetilde{b}\in \KO^{[m-j]}_{\hphantom{[}0}(X,V)$ such that $a=\widetilde{a}\multko \eta_S^{-i}, b=\widetilde{b}\multko \eta_S^{-j}$. Moreover, we have $\Theta(a)=\Theta(\widetilde{a})$ and $\Theta(b)=\Theta(\widetilde{b})$. Thus
\begin{multline*}
\Theta(a\multko b)= \Theta(\widetilde{a}\multko \eta_S^{-i}\multko \widetilde{b}\multko \eta_S^{-j})= \Theta(\langle -1 \rangle^{(m-j)i}_{\KO} \widetilde{a}\multko \widetilde{b}\multko \eta_S^{-i-j})
=\Theta( \langle -1 \rangle^{(m-j)i}_{\KO}\multko \widetilde{a}\multko \widetilde{b})
=\\
=\Theta(\langle -1 \rangle^{(m-j)i}_{\KO})\boxtimes \Theta( \widetilde{a})\boxtimes \Theta(\widetilde{b})
= \Theta(\langle -1 \rangle^{(m-j)i}_{\KO})\boxtimes \Theta( a)\boxtimes \Theta(b) = ( -1 )^{(m-j)i} \Theta( a)\boxtimes \Theta(b).
\end{multline*}
The last equality follows from the fact that $\langle -1\rangle = -1$ in the Witt group.
\end{proof}

For a special linear bundle $(E,\lambda)$ of rank $n$ over a smooth variety $X$ one can construct a Thom class for hermitian $K$-theory using the method introduced by Nenashev for Witt groups \cite{Ne07}. Let $p\colon E\to X$ be the structure map. Consider the pullback $p^*E=E\oplus E\to E$. There is a canonical diagonal section $s\colon E\to E\oplus E$ that defines a map $p^*E^\vee\to \triv_E$ via the pairing $p^*E\otimes p^*E^\vee\to \triv_E$. This map give rise to the Koszul complex 
\[
K(E)=(0\to \Lambda^np^*E^\vee\to \Lambda^{n-1}p^*E^\vee\to\ldots\to\Lambda^2p^*E^\vee\to p^*E^\vee\to\triv_E\to 0)
\]
which is treated as a chain complex located in homological degrees $n$ through $0$. It is well known that this complex is supported on $X$. The canonical isomorphism 
\[
\Xi(E)\colon K(E)\xrightarrow{\simeq}K(E)^\vee[n] \otimes \det p^*E^\vee
\]
combined with $\lambda$ induces an isomorphism 
\[
\Xi(E,\lambda)\colon K(E)\xrightarrow{\simeq} K(E)^\vee[n].
\]
One can show that $(K(E),\Xi(E,\lambda))$ is a symmetric space for the the $n$-th shifted duality. Denote
\[
\thc^{\GW}(E,\lambda)=[K(E),\Xi(E,\lambda)]\in \GW^n(E,E-X)
\] 
the corresponding element in the Grothendieck-Witt group. The element 
\[
\thc^{\KO}(E,\lambda)=\Theta^{-1}(\thc^{\GW}(E,\lambda))\in \KO_{\hphantom{[}0}^{[n]}(E,E-X)
\]
represents the Thom class of the special linear bundle $(E,\lambda)$ for the hermitian $K$-theory, in particular, the homomorphisms 
\[
-\multko \thc^{\KO}(E,\lambda)\colon \KO^{[m]}_{\hphantom{[}i}(X)\xrightarrow{\simeq} \KO_{\hphantom{[}i}^{[m+n]}(E,E-X).
\]
are isomorphisms \cite[Theorem 5.1]{PW2}.

We finish this section with the following straightforward computations.

\begin{lemma}
\label{lem_KOsymbol}
Let $X$ be a smooth variety and $f\in k[X]^*$. Denote $f_T\colon \triv_X\to\triv_X$ the isomorphism given by $f_T(x,t)=(x,f(x)t)$. Then 
\[
\langle f \rangle_{\KO} \multko \thc^{\KO}(\triv_X,\id)=\thc^{\KO}(\triv_X,f_T).
\]
\end{lemma}
\begin{proof}
Follows from the construction of the Thom classes, Lemma~\ref{lem_KOGW} and straightforward computation of tensor product. 
\end{proof}

\begin{lemma}
\label{lem_connwitt}
Let $\partial \colon \GW^0(\Gm) \to \W^1(\A^1,\Gm)$ be the connecting homomorphism in the localization sequence for the embedding $\{0\}\to \A^1$ and let $t\in k[\Gm]^*$ be the coordinate function. Then one has 
\[
\partial(\langle t \rangle) = \thc^\W(\triv_{\pt},\id),
\]
where $\thc^\W(\triv_{\pt},\id)$ is the Thom class $\thc^\GW(\triv_{\pt},\id)\in \GW^1(\A^1,\Gm)$ considered as an element of $\W^1(\A^1,\Gm)$.
\end{lemma}
\begin{proof}
In order to compute $\partial(\langle t \rangle)$ one should write down the cone for the symmetric space $(A_\bullet,t)$ with $A_\bullet$ being the complex concentrated in the zeroth degree $A_0=\triv_{\A^1}$. A straightforward computation shows that this cone coincides with the desired Thom class, $[(\triv_{\A^1}\xrightarrow{t}\triv_{\A^1}, \Xi(\triv_{\A^1},\id))]$.
\end{proof}

\section{$T$-spectrum $\BO$ and the cohomology theory $\BO^{[\star]}_{\,\,*}$.}

In \cite{PW2} Panin and Walter constructed a commutative ring $T$-spectrum $\BO$ representing hermitian $K$-theory in the following precise sense (see \cite[Theorem 1.3-1.6]{PW2} and \cite[Lemma 4.4]{PW4}):

\begin{theorem} \label{thm_BOKO}
For every smooth variety $X$ and open subset $U$ and for every $i,n\in \mathbb{Z}$ there exist canonical functorial isomorphisms $\gamma\colon \BO^{[n]}_{\hphantom{[}i}(X/U)\xrightarrow{\simeq} \KO_{\hphantom{[}i}^{[n]}(X,U)$. 
\begin{enumerate}
\item
These isomorphisms agree with connecting homomorphisms $\partial$ in localization sequences. 
\item
The $\cup$-product on $\BO^{[\star]}_{\hphantom{[}*}(-)$ arising from the monoid structure of $\BO$ agrees with the $\multko$-product on $\KO_{\hphantom{[}*}^{[\star]}(-)$. 
\item
$\BO^{[\star]}_{\hphantom{[}*}(-)$ is equipped with Thom classes $\thc^{\BO}(E,\lambda)$ defined for special linear bundles. These classes satisfy $\gamma(\thc^{\BO}(E,\lambda))=\thc^{\KO}(E,\lambda)$ and $\thc^{\BO}(\triv_X,\id)=\Sigma_T 1_X$. 
\item
$\gamma(1)=\langle 1\rangle_{\KO}$ and $\gamma(\epsilon)=\langle -1\rangle_{\KO}$.
\end{enumerate}
\end{theorem}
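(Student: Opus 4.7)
\medskip

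The plan is to build the spectrum $BO$ from orthogonal/hermitian Grassmannians exactly as in \cite{PW2} and then unpack how each of the four claims is encoded in that construction. First I would recall Panin--Walter's construction: for each $n$, let $\BO_n$ be a geometric model (built out of hermitian Grassmannians and an appropriate shift) whose motivic homotopy classes of maps out of $X_+/U_+$ compute $KO_{2q-p}^{[q]}(X,U)$ for the appropriate bigrading, and let the bonding maps $T \wedge \BO_n \to \BO_{n+1}$ be induced by the periodicity isomorphism for hermitian $K$-theory (Bott-like $(2,1)$-periodicity); verifying the structure maps are well defined up to homotopy and using the rigidification to a strict symmetric $T$-spectrum is the bulk of Panin--Walter's Corollary~7.3. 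The pairing on hermitian Grassmannians coming from direct sum and tensor product of hermitian bundles then upgrades $BO$ to a commutative monoid $(BO,m,e)$ in $\SH(k)$.

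With $BO$ in hand, the isomorphism $\gamma\colon BO^{p,q}(X_+/U_+) \xrightarrow{\simeq} KO^{[q]}_{2q-p}(X,U)$ is tautological: the $p,q$-th cohomology group is by definition maps in $\SH(k)$ into a shift of the representing spectrum, and by the construction of $BO_n$ each such mapping space is weakly equivalent to the corresponding hermitian $K$-theory space with the indicated shifted duality and homotopical degree. Property (1) (agreement with $\partial$) is then automatic: both the cohomological long exact sequence of the pair $(X_+,U_+)$ coming from the cofiber sequence $U_+ \to X_+ \to X_+/U_+$ and the Schlichting localization sequence for $(X,U)$ are induced by the same geometric cofiber sequence, so the naturality of $\gamma$ forces the connecting maps to match after passing to a fibrant replacement.

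For property (3), one has to exhibit a canonical Thom class $th^{BO}(E,\lambda)\in BO^{2n,n}(Th(E))$ and show it corresponds under $\gamma$ to the Koszul symmetric space $th^{KO}(E,\lambda)$. Following Panin--Walter, one builds $th^{BO}(E,\lambda)$ as the composition $Th(E)\to Th(\Tc(n,m))\to BO_n$ classifying the pair $(E,\lambda)$ together with the tautological Koszul class on the special linear Grassmannian. The image under $\gamma$ is then the pullback of the universal Koszul symmetric space on the hermitian Grassmannian, which up to a canonical isomorphism of symmetric complexes is $(K(E),\Theta(E,\lambda))$. The normalization $th^{BO}(\triv_X,1)=\Sigma_T 1$ amounts to checking the case $n=1, X=\pt$, where the Koszul complex is $(\triv_{\A^1}\xrightarrow{t}\triv_{\A^1},\Theta)$ and its image under $\gamma^{-1}$ tautologically picks out the suspension generator of $BO^{2,1}(T)$.

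For property (4), $\gamma(1)=\langle 1\rangle$ is just the compatibility of the units $e\colon\SSp\to BO$ and the class of $(\triv_X,1)$ in $GW^0(X)$. The equality $\gamma(\epsilon)=\langle -1\rangle$ is reduced via Definition~\ref{def_symb} to computing $\gamma$ applied to the symbol $\langle -1\rangle_{BO}$, which by the Thom class normalization (3) and the equality $th^{BO}(\triv_X,f_T)=\langle f\rangle_{BO}\cup th^{BO}(\triv_X,1)$ corresponds to the class of the symmetric form $t\mapsto -t$, i.e.\ $\langle -1\rangle\in GW^0(X)$. The main obstacle is property (2): the cup-product $\cup$ on $BO^{*,*}$ is constructed spectrum-theoretically from $m\colon BO\wedge BO\to BO$, while the Gille--Nenashev right pairing is built from tensor products of Koszul-type symmetric complexes with an explicit sign convention. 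To match them I would follow the strategy of \cite[Lemma~4.4]{PW4}: reduce to checking the compatibility on Thom classes of special linear (or trivialized) bundles, where both pairings are computed by the same tensor product of Koszul complexes up to the sign $\langle -1\rangle^{nm}$ that was built into the definition of the left pairing. Tracking this sign carefully through the identification $\gamma$ is the place where all the bookkeeping lives, and it is the step I would expect to be the most technically involved.
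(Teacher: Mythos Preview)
This theorem is not proved in the paper at all: it is stated as a summary of results from \cite{PW2} (specifically Corollary~7.3 and Theorem~13.4) together with \cite[Lemma~4.4]{PW4}, and the paper simply cites those references without giving any argument. So there is no ``paper's own proof'' to compare your proposal against.

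Your sketch is a reasonable outline of what the Panin--Walter papers do --- build $BO$ from hermitian Grassmannians, read off $\gamma$ from representability, and then verify the compatibility with products, Thom classes, and the unit --- and you have correctly identified item~(2) as the place where the real work lives. But as far as this paper is concerned, all of that is outsourced, and the appropriate ``proof'' here is simply the citation. If you were writing this up for the present paper, the correct move is to replace your sketch with the sentence ``See \cite[Corollary~7.3, Theorem~13.4]{PW2} and \cite[Lemma~4.4]{PW4}'' and move on.
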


\begin{corollary}
\label{cor_BOsymb}
For a smooth variety $X$ and $f\in k[X]^*$ we have 
\[
\gamma(\langle f\rangle_{\BO})=\langle f \rangle_{\KO}.
\]
\end{corollary}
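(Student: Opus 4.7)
The plan is to transport the defining relation for $\langle f\rangle_{BO}$ across the isomorphism $\gamma$ and recognize the result as the defining relation for $\langle f\rangle_{KO}$. By the definition of the $BO$-symbol, $\langle f\rangle_{BO}\in BO^{0,0}(X)$ is the unique element satisfying
\[
\langle f\rangle_{BO}\cup th^{BO}(\triv_X,1)=th^{BO}(\triv_X,f_T)
\]
in $BO^{2,1}(Th(\triv_X))$, uniqueness being guaranteed by the Thom isomorphism $-\cup th^{BO}(\triv_X,1)\colon BO^{0,0}(X)\xrightarrow{\simeq}BO^{2,1}(Th(\triv_X))$ furnished by the special linear orientation on $BO$.

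Next I would apply $\gamma$ to both sides. Part (2) of the preceding theorem identifies the $\cup$-product on $BO^{2*,*}(-)$ with the right Gille--Nenashev pairing on $KO_0^{[*]}(-)$, and part (3) asserts that $\gamma$ sends each $BO$-Thom class of a special linear bundle to the corresponding $KO$-Thom class. Together these give
\[
\gamma(\langle f\rangle_{BO})\star th^{KO}(\triv_X,1)=th^{KO}(\triv_X,f_T)
\]
in $KO_0^{[1]}(\triv_X,\triv_X-X)$.

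Finally I would invoke (the proof of) Lemma~\ref{lem_KOsymbol}, which established that $\langle f\rangle\in KO_0^{[0]}(X)$ satisfies precisely the same identity. Since $-\star th^{KO}(\triv_X,1)$ is a Thom isomorphism, the equation has a unique solution, so $\gamma(\langle f\rangle_{BO})=\langle f\rangle$, as required.

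The argument is essentially bookkeeping through the representability isomorphism, so no substantive obstacle arises. The only point needing care is to make sure the $\cup$-product used in the definition of $\langle f\rangle_{BO}$ corresponds under $\gamma$ to the \emph{right} Gille--Nenashev pairing rather than the left one, which would differ by a $\langle-1\rangle$ sign when the degrees of both factors are nonzero; here the degrees of the symbol and of the Thom class make the two pairings coincide, and in any case this is covered by part (2) of the theorem.
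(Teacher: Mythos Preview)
Your argument is correct and is essentially the same as the paper's: both transport the defining relation for the symbol across $\gamma$ using parts (2) and (3) of the theorem and then appeal to Lemma~\ref{lem_KOsymbol}. The paper compresses these steps into a single sentence, while you spell them out explicitly; your remark about left versus right Gille--Nenashev pairings is a fair point of care but indeed immaterial here since one factor has bidegree $(0,0)$.
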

\begin{proof}
By the above theorem combined with Lemmas~\ref{lem_symbol_via_Thom} and~\ref{lem_KOsymbol} we have
\begin{multline*}
\gamma(\langle f\rangle_{\BO})\multko \thc^{\KO}(\triv_X,\id)=\gamma(\langle f\rangle_{\BO}\cup \thc^{\BO}(\triv_X,\id))=\\
=\gamma(\thc^{\BO}(\triv_X,f_T))=\thc^{\KO}(\triv_X,f_T)=\langle f \rangle_{\KO}\multko \thc^{\KO}(\triv_X,\id).
\end{multline*}
Canceling $\thc^{\KO}(\triv_X,\id)$ we obtain the claim.
\end{proof}

\begin{lemma}
\label{lem_etaW}
For the stable Hopf map $\eta\in \BO^{[-1]}_{\hphantom{[}-1}(\pt)$ we have $\gamma(\eta)=\eta_S$, where $\eta_S\in \KO^{[-1]}_{\hphantom{[}-1}(\pt)$ is the element corresponding to $1\in \W^{0}(\pt)\cong \KO^{[-1]}_{\hphantom{[}-1}(\pt)$ (see the proof of Lemma~\ref{lem_KOGW} and \cite[Sections~6,7]{Sch12}).
\end{lemma}
\begin{proof}
Consider the following diagram.
\[
\xymatrix@C=5pc{
\BO^{[0]}_{\hphantom{[}0}(\Gm) \ar[r]^\partial \ar[d]_{\gamma}^{\simeq} & \BO^{[0]}_{-1}(T) \ar[d]_{\gamma}^{\simeq} & \BO^{[-1]}_{\hphantom{[}-1}(\pt) \ar[l]_{ \Sigma_T}  \ar[d]_{\gamma}^{\simeq} \\
\KO^{[0]}_{\hphantom{[}0}(\Gm) \ar[r]^\partial \ar[d]_{\Theta}^{\simeq} & \KO^{[0]}_{-1}(\A^1,\Gm) \ar[d]_{\Theta}^{\simeq}  & \KO^{[-1]}_{\hphantom{[}-1}(\pt) \ar[l]_{\cup \thc^{\KO}(\triv_\pt,\id)} \ar[d]_{-\Theta}^{\simeq} \\
\GW^{0}(\Gm) \ar[r]^\partial & \W^1(\A^1,\Gm) & \W^{0}(\pt) \ar[l]_{\boxtimes \thc^{\GW}(\triv_\pt,\id)}
}
\]
Here $\partial$ denotes the connecting homomorphisms in the localization sequences for the embedding $\{0\}\to \A^1$.
The top half commutes by Theorem~\ref{thm_BOKO}, the lower right square commutes by Lemma~\ref{lem_KOGW}, the lower left square commutes since $\Theta$ is induced by a natural morphism of spectra \cite[Section~7]{Sch12}. 
By Theorem~\ref{thm_etacoh} we have
\[
(\Theta\gamma(\eta))\boxtimes \thc^{\GW}(\triv_\pt,\id)=(\Theta\gamma\Sigma_T^{-1}\partial (\langle -t^{-1}\rangle_{\BO}))\boxtimes \thc^{\GW}(\triv_\pt,\id).
\]
Using the commutativity of the above diagram we obtain
\[
(\Theta\gamma\Sigma_T^{-1}\partial (\langle -t^{-1}\rangle_{\BO}))\boxtimes \thc^{\GW}(\triv_\pt,\id)=-\partial\Theta\gamma(\langle -t^{-1}\rangle_{\BO})
\]
Lemma~\ref{lem_connwitt} together with Corollary~\ref{cor_BOsymb} yield
\[
-\partial\Theta\gamma(\langle -t^{-1}\rangle_{\BO})=-\partial(\langle -t^{-1}\rangle)=-\partial(\langle -t\rangle)=-\langle-1\rangle\boxtimes\partial(\langle t\rangle)=\thc^\W(\triv_\pt,\id).
\]
Combining the above equalities we obtain
\[
(\Theta\gamma(\eta))\boxtimes \thc^{\GW}(\triv_\pt,\id)=\thc^\W(\triv_\pt,\id)=\Theta(\eta_S) \boxtimes \thc^{\GW}(\triv_\pt,\id).
\]
The claim follows via cancellation of the Thom classes and $\Theta$.
\end{proof}

The main result of this section states that inverting the stable Hopf map in $\BO^{[\star]}_{\hphantom{[}*}(X)$ one obtains the Laurent polynomial ring over $\W^\star(X)$ in $\eta$ with a certain commutativity condition. The more precise statement follows. 

\begin{definition}
For a smooth variety $X$ let $\W^{\star}(X)[\eta,\eta^{-1}]$ be a bigraded $(-1,-1)$--commutative Laurent polynomial algebra with $\deg \eta=(-1,-1)$ and $\deg a=(n,0)$ for $a\in \W^{n}(X)$, i.e. for $a\in \W^n(X),b\in \W^m(X)$ one has
\[
(a\cdot\eta^i)\cdot(b\cdot\eta^j)=(-1)^{(n-i)(m-j)+ij} (b\cdot\eta^j)\cdot (a\cdot\eta^i).
\]
We use Gille-Nenashev pairing on $\W^{\star}(X)$, i.e. $a\cdot b=a\boxtimes b$ for $a,b$ as above.

Define $\psi\colon \BO^{[\star]}_{\hphantom{[}*}(X)\to \W^{\star}(X)[\eta,\eta^{-1}]$ in the following way. For $a\in \BO^{[n]}_{\hphantom{[}i}(X)$ put
\[
\psi(a)=\left\{
\begin{array}{ll}
\Theta\gamma(a)\cdot \eta^{-i}, & i<0\\
\Theta\gamma(a\cup\eta^{2i+2})\cdot\eta^{-i}, & i\ge 0.
\end{array}\right.
\]
We claim that $\psi$ is a homomorphism of bigraded algebras. One can easily see that $\psi$ is additive and respects the grading. In order to check the multiplicativity property take $a\in \BO^{[n]}_{\hphantom{[}i}(X), b\in \BO^{[m]}_{\hphantom{[}j}(X)$. The case of $i<0, j<0$ follows from Lemma~\ref{lem_KOGW} and Theorem~\ref{thm_BOKO}:
\begin{multline*}
\psi(a\cup b)=\Theta\gamma(a\cup b)\cdot\eta^{-i-j}=((-1)^{(m-j)i}\Theta\gamma(a)\boxtimes \Theta\gamma(b))\cdot\eta^{-i-j}=\\
=(-1)^{(m-j)i}(-1)^{-(m-j)i}\Theta\gamma(a)\cdot\eta^{-i}\cdot\Theta\gamma(b)\cdot\eta^{-j}=\psi(a)\cdot\psi(b).
\end{multline*}
The remaining cases are quite the same and straightforward, so we leave the detailed check to the reader.
\end{definition}

\begin{theorem}
\label{thm_KOW}
The morphism $\psi$ induces an isomorphism of bigraded algebras
\[
\widetilde{\psi}\colon \BO^{[\star]}_{\hphantom{[}*}(X)[\eta^{-1}]\xrightarrow{\simeq} \W^{\star}(X)[\eta,\eta^{-1}].
\]
\end{theorem}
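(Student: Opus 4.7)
The plan is to identify both sides of $\widetilde{\psi}$ bidegree by bidegree with $W^{p-q}(X)$ and to verify that $\widetilde{\psi}$ becomes, up to an invertible sign, the representability isomorphism $\gamma$.

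The key preliminary step is to compute the action of $\eta$ on $BO^{p,q}(X)$ in the ``Witt range'' $2q-p\le 0$. For any such $a\in BO^{p,q}(X)$, Lemma~\ref{lem_gammamult} applied to $a$ and $\eta\in BO^{-1,-1}(\pt)$ yields
\[
\gamma(a\cup\eta)=(-1)^{p((-1)-(-1))}\gamma(a)\star\gamma(\eta)=\gamma(a)\star\langle-1\rangle=-\gamma(a),
\]
using Lemma~\ref{lem_etaW} and the identity $\langle-1\rangle=-1$ in $W^0(\pt)$. In particular multiplication by $\eta$ is an isomorphism throughout the Witt range, acting as $-1$ on each $W^{p-q}(X)$ via $\gamma$.

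Next, I would compute both sides in a fixed bidegree $(p,q)$. The target piece $(W^*(X)[\eta,\eta^{-1}])^{p,q}$ is by construction $W^{p-q}(X)\cdot\eta^{p-2q}$, canonically a copy of $W^{p-q}(X)$. The source piece $(BO^{*,*}(X)[\eta^{-1}])^{p,q}$ is the colimit $\varinjlim_{k}BO^{p-k,q-k}(X)$ along multiplication by $\eta$. For $k$ large enough that $2(q-k)-(p-k)<0$, the representability theorem identifies each term with $W^{p-q}(X)$, and by the previous step every subsequent transition map becomes $-\mathrm{id}$; the colimit is therefore canonically $W^{p-q}(X)$, realised via any sufficiently deep term.

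Finally, I would trace the definition of $\psi$ through these identifications. Since $\psi(\eta)=\gamma(\eta)\star\eta=-\eta$ by the Witt-range formula and Lemma~\ref{lem_etaW}, for $a\in BO^{p-k,q-k}(X)$ in the Witt range, representing the class $a\cdot\eta^{-k}$ of source bidegree $(p,q)$, one has
\[
\widetilde{\psi}(a\cdot\eta^{-k})=\psi(a)\cdot\psi(\eta)^{-k}=\bigl(\gamma(a)\star\eta^{p-2q+k}\bigr)\cdot(-\eta)^{-k}=(-1)^k\gamma(a)\star\eta^{p-2q},
\]
which up to the invertible sign $(-1)^k$ is precisely the representability isomorphism. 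Consequently $\widetilde{\psi}$ is an isomorphism in every bidegree. The main technical care required is sign and consistency bookkeeping: one must verify that $\psi$ is compatible with multiplication by $\eta$ so that $\widetilde{\psi}$ is well-defined on the localization, and that the non-Witt-range branch $\psi(a)=\gamma(a\cup\eta^{4q-2p+2})\star\eta^{p-2q}$ of the definition is consistent with the Witt-range branch after inverting $\eta$. Both reduce to applications of Lemma~\ref{lem_gammamult}: the exponent $4q-2p+2$ is chosen precisely so that $a\cup\eta^{4q-2p+2}$ lands in the Witt range where $\gamma$ is the representability isomorphism, and the resulting signs match the commutativity rules of $W^*(X)[\eta,\eta^{-1}]$.
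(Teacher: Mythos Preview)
Your proposal is correct and uses the same ingredients as the paper: Lemma~\ref{lem_gammamult} for multiplicativity, Lemma~\ref{lem_etaW} for $\gamma(\eta)=-1$, and the fact that $\gamma$ is the representability isomorphism in the range $2q-p<0$. The paper organises these slightly differently---it first checks that $\psi$ is a full ring homomorphism, then observes $\psi(\eta)=-\eta$ is invertible so $\widetilde{\psi}$ exists, and finally notes that $\psi$ is already an isomorphism in each bidegree with $2q-p<0$, whence $\widetilde{\psi}$ is an isomorphism everywhere---whereas you unpack this last implication explicitly via the colimit description of the localization; but the argument is essentially the same.
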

\begin{proof}
By Lemma~\ref{lem_etaW} we know that 
\[
\psi(\eta)=\Theta\gamma(\eta)\cdot \eta=\eta
\]
thus $\psi(\eta)$ is invertible and $\psi$ induces a homomorphism
\[
\widetilde{\psi}\colon \BO^{[\star]}_{\hphantom{[}*}(X)[\eta^{-1}]\xrightarrow{} \W^{\star}(X)[\eta,\eta^{-1}].
\]
Recall that $\psi\colon \BO^{[n]}_{\hphantom{[}i}(X)\to (\W^{\star}(X)[\eta,\eta^{-1}])^{n,i}$ is an isomorphism for $i<0$, thus $\widetilde{\psi}$ is an isomorphism as well. \qedhere
\end{proof}

\section{A motivic variant of a theorem by Conner and Floyd.}

For this section fix the canonical morphisms 
\[
\varphi^{\Sp}\colon \MSp \to \BO,\quad\psi^{\Sp}\colon \MSp \to \MSL
\]
given by Theorem~\ref{thm_univMSp} and a morphism 
\[
\varphi^{\SL}\colon \MSL\to \BO
\]
given by Theorem~\ref{thm_univMSL}. 

\begin{definition}
\label{def_etaloc}
Let $A$ be a commutative ring $T$-spectrum and $Y$ be a pointed motivic space. Define
\[
A^{[\star]}_{\eta *}(Y)=A^{[\star]}_{\hphantom{[}*}(Y)\otimes_{A^{[\star]}_{\hphantom{[}*}(\pt)} A^{[\star]}_{\hphantom{[}*}(\pt)[\eta^{-1}],\quad
A^{[n]}_{\eta i}(Y)=(A^{[\star]}_{\hphantom{[} *}(Y)\otimes_{A^{[\star]}_{\hphantom{[} *}(\pt)}A^{[\star]}_{ \hphantom{[}*}(\pt)[\eta^{-1}])^{[n]}_{\hphantom{[} i}
\]
to be the $([1],1)$-periodic ring cohomology theory obtained by inverting the stable Hopf map $\eta\in A^{[-1]}_{\hphantom{[}1}(\pt)$. In other words, we consider $A^{[\star]}_{\hphantom{[} *}(Y)$ as a $\pi^{[\star]}_{\hphantom{[}*}(\pt)$-bimodule (see Section 2) and localize at $\eta\in \pi^{[-1]}_{\hphantom{[}-1}(\pt)$.
\end{definition}

\begin{lemma}
\label{lem_etaphi}
For a small pointed motivic space $Y$ morphisms $\varphi^{\Sp}, \varphi^{\SL}, \psi^{\Sp}$ induce ring homomorphisms fitting in the commutative triangle
\[
\xymatrix{
\MSp_{\eta *}^{[\star]}(Y) \ar[r]^{\varphi^{\Sp}_{\eta,Y}} \ar[d]_{\psi^{\Sp}_{\eta,Y}}& \BO_{\eta *}^{[\star]}(Y) \\
\MSL_{\eta *}^{[\star]}(Y) \ar[ur]_{\varphi^{\SL}_{\eta,Y}}& 
}
\]
\end{lemma}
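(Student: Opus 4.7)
The plan is to reduce each of the three claims (multiplicativity, extension to the $\eta$-localization, and commutativity of the triangle) to results already in the paper: Theorems~\ref{thm_univMSp} and~\ref{thm_univMSL}, Lemma~\ref{lem_smallMult}, and the compatibility between the symplectic and special linear orientations on $BO$.

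First, I would dispose of multiplicativity. By Theorem~\ref{thm_univMSp}, both $\varphi^{Sp}$ and $\psi^{Sp}$ are morphisms of commutative monoids in $\SH(k)$, so on cohomology of any pointed motivic space they induce ring homomorphisms, and in particular homomorphisms of $\pi^{*,*}(pt)$-algebras. The morphism $\varphi^{SL}$ from Theorem~\ref{thm_univMSL} is only guaranteed to be a morphism in $\SH(k)$ (there is a $\lim^1$-obstruction for it to be a monoid map), and this is precisely the obstacle that forces the smallness hypothesis on $Y$. By Lemma~\ref{lem_smallMult}, for small $Y$ the map $\varphi^{SL}_Y$ respects cross products; pulling back along the diagonal $Y\to Y\wedge Y$ then shows it respects cup products, and the same lemma states that it is a homomorphism of $\pi^{*,*}(pt)$-algebras.

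Next, the passage to the $\eta$-localization is formal. Each of the three maps, being a $\pi^{*,*}(pt)$-algebra homomorphism, carries the image of $\eta\in\pi^{-1,-1}(pt)$ in the source to the image of $\eta$ in the target. Inspecting Definition~\ref{def_etaloc}, the localization $A^{*,*}_\eta(Y) = A^{*,*}(Y)\otimes_{A^{*,*}(pt)} A^{*,*}(pt)[\eta^{-1}]$ is obtained by inverting a central element, so any such homomorphism extends uniquely to a ring homomorphism between the $\eta$-localized theories, giving $\varphi^{Sp}_{\eta,Y}$, $\psi^{Sp}_{\eta,Y}$ and $\varphi^{SL}_{\eta,Y}$.

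Finally, for the commutativity of the triangle it suffices to check $\varphi^{SL}\circ\psi^{Sp}=\varphi^{Sp}$ as morphisms $MSp\to BO$ in $\SH(k)$; the induced identity on cohomology (and on the $\eta$-localization) will then follow. I would invoke the uniqueness clause of Theorem~\ref{thm_univMSp} applied to $A=BO$: the morphism $\varphi^{Sp}$ is characterized by sending every $th^{MSp}(E,\theta)$ to $th^{BO}(E,\theta)$. For a symplectic bundle $(E,\theta)$, write $\lambda_\theta$ for the trivialization of $\det E$ obtained from the top exterior power of $\theta$. Then $\psi^{Sp}$ sends $th^{MSp}(E,\theta)$ to $th^{MSL}(E,\lambda_\theta)$ by its defining property, and $\varphi^{SL}$ sends $th^{MSL}(E,\lambda_\theta)$ to $th^{BO}(E,\lambda_\theta)$ by Theorem~\ref{thm_univMSL}. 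The identification $th^{BO}(E,\lambda_\theta)=th^{BO}(E,\theta)$ is precisely the statement that the normalized symplectic orientation on $BO$ is the one induced from its special linear orientation, a fact recorded among the properties of $BO$ in the previous section. By the uniqueness in Theorem~\ref{thm_univMSp} we then conclude $\varphi^{SL}\circ\psi^{Sp}=\varphi^{Sp}$.

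The only genuine subtlety is the first step: the fact that $\varphi^{SL}$ need not be a monoid map on the spectrum level could, a priori, spoil multiplicativity of the induced map on cohomology. The smallness hypothesis on $Y$, via Lemma~\ref{lem_smallMult}, is exactly what circumvents this issue; everything else is bookkeeping with Thom classes and uniqueness.
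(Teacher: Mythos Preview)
Your proposal is correct and follows essentially the same approach as the paper: the paper's proof likewise obtains $\varphi^{Sp}=\varphi^{SL}\psi^{Sp}$ from the uniqueness clause of Theorem~\ref{thm_univMSp}, observes that $\varphi^{Sp}_Y,\psi^{Sp}_Y$ are $\pi^{*,*}(pt)$-algebra maps since they come from monoid morphisms, invokes Lemma~\ref{lem_smallMult} for $\varphi^{SL}_Y$, and then localizes. You have simply spelled out in more detail the Thom-class chase behind the uniqueness argument, which the paper leaves implicit.
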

\begin{proof}
By the uniqueness part of Theorem~\ref{thm_univMSp} it follows that $\varphi^{\Sp}=\varphi^{\SL}\psi^{\Sp}$, so the following triangle commutes.
\[
\xymatrix{
\MSp^{[\star]}_{\hphantom{[}  *}(Y) \ar[r]^{\varphi^{\Sp}_{Y}} \ar[d]_{\psi^{\Sp}_{Y}}& \BO^{[\star]}_{\hphantom{[} *}(Y) \\
\MSL^{[\star]}_{ \hphantom{[} *}(Y) \ar[ur]_{\varphi^{\SL}_{Y}}& 
}
\]
The morphisms $\varphi^{\Sp},\psi^{\Sp}$ are morphisms of the monoids, thus $\varphi^{\Sp}_{Y}$ and $\psi^{\Sp}_{Y}$ are homomorphisms of $\pi^{[\star]}_{\hphantom{[} *}(\pt)$-algebras. The last morphism $\varphi^{\SL}_Y$ is a $\pi^{[\star]}_{\hphantom{[} *}(\pt)$-algebra homomorphism as well by Lemma~\ref{lem_smallMult}, so the claim follows via localization.
\end{proof}

\begin{definition}
For $A\in \SH(k)$ and a motivic space $Y$ put
\[
A^{[2\star]}_{\hphantom{[}0}(Y)=\bigoplus_{n\in\mathbb{Z}}A^{[2n]}_{\hphantom{[}0}(Y).
\]
\end{definition}

Recall the following theorem reconstructing hermitian $K$-theory via algebraic symplectic cobordism \cite[Theorem 1.1]{PW4}.
\begin{theorem}
\label{thm_sCF}
For every small pointed motivic space $Y$ morphism $\varphi^{\Sp}$ induces an isomorphism of bigraded rings
\[
\overline{\varphi}_Y^{\Sp}\colon \MSp^{[\star]}_{\hphantom{[} *}(Y)\otimes_{\MSp^{[2\star]}_{\hphantom{[} 0}(\pt)} \BO^{[2\star]}_{\hphantom{[} 0} (\pt) \xrightarrow{\simeq} \BO^{[\star]}_{\hphantom{[} *}(Y).
\]
\end{theorem}

\begin{corollary}
\label{cor_sCFW}
For every smooth variety $X$ morphism $\varphi^{\Sp}$ induces an isomorphism
\[
\MSp_{\eta *}^{[\star]}(X)\otimes_{\MSp_{\hphantom{[} 0}^{[2\star]}(\pt)} \W^{2\star} (\pt) \xrightarrow{\simeq} \W^{\star}(X)[\eta,\eta^{-1}].
\]
\end{corollary}
\begin{proof}
Isomorphism $\overline{\varphi}_Y^{\Sp}$ is an isomorphism of $\pi^{[\star]}_{\hphantom{[}*}(\pt)$-modules with the module structure on the left arising from the first factor. Inverting the stable Hopf map and applying Theorem~\ref{thm_KOW} we obtain an isomorphism
\[
\overline{\varphi}_{X}^{\Sp}\colon \MSp_{\eta *}^{[\star]}(X)\otimes_{\MSp_{\hphantom{[} 0}^{[2\star]}(\pt)} \BO_{\hphantom{[} 0}^{[2\star]} (\pt) \xrightarrow{\simeq} \W^{\star}(X)[\eta,\eta^{-1}].
\]
Consider the following commutative diagram.
\[
\xymatrix{
\MSp_{\eta *}^{[\star]}(X)\otimes_{\MSp_{\hphantom{[} 0}^{[2\star]}(\pt)} \BO_{\hphantom{[} 0}^{[2\star]} (\pt) \ar[r]^(0.65){\overline{\varphi}_{X}^{\Sp}} \ar[d]_{\pi} &  \W^{\star}(X)[\eta,\eta^{-1}] \\
\MSp_{\eta *}^{[\star]}(X)\otimes_{\MSp_{\hphantom{[} 0}^{[2\star]}(\pt)} \W^{2\star} (\pt) \ar[ur]_(0.6){\varphi} & 
}
\]
Here $\varphi$ and $\pi$ are induced by $\varphi^{\Sp}$ and the natural surjection 
\[
\BO_{\hphantom{[} 0}^{[2\star]} (\pt)\cong \GW^{2\star} (\pt)\to \W^{2\star} (\pt)
\]
respectively. The map $\overline{\varphi}_{X}^{\Sp}$ is an isomorphism and $\pi$ is surjective, thus $\varphi$ is an isomorphism.
\end{proof}

The goal of this section is to replace symplectic cobordism in the above isomorphisms with the special linear one. Shortening the notation, set
\[
\overline{\MSL}_{\eta *}^{[\star]}(Y)=\MSL_{\eta *}^{[\star]}(Y)\otimes_{\MSL_{\hphantom{[} 0}^{[2\star]}(\pt)} \BO_{\hphantom{[} 0}^{[2\star]} (\pt).
\]
By Lemma~\ref{lem_etaphi} for every small pointed motivic space $Y$ morphism $\varphi^{\SL}$ induces a natural homomorphism
\[
\overline{\varphi}_{\eta,Y}^{\SL}\colon \overline{\MSL}_{\eta *}^{[\star]}(Y) \xrightarrow{} \BO_{\eta *}^{[\star]}(Y).
\]
\begin{lemma}
\label{lem_section}
For every small pointed motivic space $Y$ there is a natural homomorphism of $\pi_{\hphantom{[} *}^{[\star]}(\pt)$-algebras
\[
t_{Y}\colon \BO_{\eta *}^{[\star]}(Y) \to \overline{\MSL}_{\eta *}^{[\star]}(Y)
\]
such that 
\begin{enumerate}
\item
$\overline{\varphi}_{\eta,Y}^{\SL}\circ t_{Y}=\id$.
\item
$t_Y(a)=1\otimes a$ for every $a\in \BO_{\eta 0}^{[2\star]}(\pt)$.
\item
$t_{\Th(E)}(\thc^{\BO}(\Tc))=\thc^{\MSL}(\Tc)\otimes 1$ for every smooth variety $X$ and every special linear bundle $\Tc=(E,\lambda)$ such that there exists a symplectic form $\phi$ on $E$ compatible with trivialization $\lambda$. 
\end{enumerate}
\end{lemma}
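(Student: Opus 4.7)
The plan is to construct $t_Y$ as the composition of the inverse of the Panin--Walter isomorphism (Theorem~\ref{thm_sCF}) with the base change along $\psi^{Sp}$. Inverting $\eta$ in Theorem~\ref{thm_sCF} gives a natural $\pi^{*,*}(pt)$-algebra isomorphism
\[
\overline{\varphi}^{Sp}_{\eta,Y}\colon MSp^{*,*}_\eta(Y)\otimes_{MSp^{4*,2*}(pt)}BO^{4*,2*}_\eta(pt)\xrightarrow{\simeq} BO^{*,*}_\eta(Y).
\]
Because $\varphi^{Sp}=\varphi^{SL}\circ\psi^{Sp}$ by Lemma~\ref{lem_etaphi}, the ring map $\psi^{Sp}_{\eta,pt}\colon MSp^{4*,2*}(pt)\to MSL^{4*,2*}(pt)$ together with $\psi^{Sp}_{\eta,Y}$ furnish a natural $\pi^{*,*}(pt)$-algebra homomorphism
\[
\Psi_Y\colon MSp^{*,*}_\eta(Y)\otimes_{MSp^{4*,2*}(pt)}BO^{4*,2*}_\eta(pt)\to \overline{MSL}^{*,*}_\eta(Y),\qquad a\otimes b\mapsto \psi^{Sp}_{\eta,Y}(a)\otimes b,
\]
and I set $t_Y=\Psi_Y\circ(\overline{\varphi}^{Sp}_{\eta,Y})^{-1}$. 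Naturality in $Y$ is inherited from that of the two factors.

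Property~(1) is then immediate: the identity $\varphi^{Sp}=\varphi^{SL}\circ\psi^{Sp}$ gives $\overline{\varphi}^{SL}_{\eta,Y}\circ \Psi_Y=\overline{\varphi}^{Sp}_{\eta,Y}$, whence $\overline{\varphi}^{SL}_{\eta,Y}\circ t_Y=id$. For property~(2) I take $Y=pt$: since $\overline{\varphi}^{Sp}_{\eta,pt}(1\otimes a)=a$, the preimage of $a\in BO^{4*,2*}_\eta(pt)$ is $1\otimes a$, so $t_{pt}(a)=1\otimes a$, and the case of general $Y$ follows from naturality along the structure map $Y\to pt$.

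The main step is property~(3). Let $\Tc=(E,\lambda)$ and let $\phi$ be a symplectic form on $E$ whose canonical trivialization of $\det E$ equals $\lambda$. Both $BO$ and $MSL$ carry symplectic orientations induced from their special linear ones, and the compatibility between $\phi$ and $\lambda$ yields
\[
th^{BO}(E,\lambda)=th^{BO}(E,\phi)\quad\text{and}\quad \psi^{Sp}_{Th(E)}(th^{MSp}(E,\phi))=th^{MSL}(E,\lambda),
\]
the second equality following from Theorem~\ref{thm_univMSp} applied to $A=MSL$ with its induced symplectic orientation. Applying Theorem~\ref{thm_univMSp} to $A=BO$ shows that $\overline{\varphi}^{Sp}_{\eta,Th(E)}(th^{MSp}(E,\phi)\otimes 1)=th^{BO}(E,\phi)=th^{BO}(\Tc)$, so evaluating $\Psi_{Th(E)}$ on this preimage yields $t_{Th(E)}(th^{BO}(\Tc))=th^{MSL}(\Tc)\otimes 1$.

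The main obstacle is property~(3); specifically, checking that the symplectic and special linear Thom classes of $(E,\phi)$ coincide in a theory carrying both orientations compatibly. This rests on the construction of the induced symplectic orientation from a special linear one and on the uniqueness clause in Theorem~\ref{thm_univMSp}, applied both to $BO$ and to $MSL$.
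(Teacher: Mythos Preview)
Your construction is the same as the paper's: define $t_Y$ as the composite of the inverse of the $\eta$-localized Panin--Walter isomorphism with the base-change map induced by $\psi^{Sp}$, and then verify the three properties using $\varphi^{Sp}=\varphi^{SL}\circ\psi^{Sp}$ together with the fact that $\psi^{Sp}$ and $\varphi^{Sp}$ send symplectic Thom classes to the induced special linear Thom classes. The only cosmetic difference is that the paper keeps $BO^{4*,2*}(pt)$ (unlocalized) in the source of its intermediate diagram and deduces property~(2) from the surjectivity of $BO^{4*,2*}(pt)\to BO^{4*,2*}_\eta(pt)$, whereas you pass to $BO^{4*,2*}_\eta(pt)$ immediately and use naturality along $Y\to pt$; both are fine.
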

\begin{proof}
The following diagram commutes.
\[
\xymatrix{
\MSp_{\eta *}^{[\star]}(Y)\otimes_{\MSp_{\hphantom{[} 0}^{[2\star]}(\pt)} \BO_{\hphantom{[} 0}^{[2\star]}(\pt) \ar[r]^(0.73){\overline{\varphi}^{\Sp}_{\eta,Y}} \ar[d]_{\theta_Y} & \BO_{\eta *}^{[\star]}(Y) \\
\MSL_{\eta *}^{[\star]}(Y)\otimes_{\MSL_{\hphantom{[} 0}^{[2\star]}(\pt)} \BO_{\eta 0}^{[2\star]} (\pt) \ar[r]^(0.7){=}  &  
\overline{\MSL}_{\eta *}^{[\star]}(Y) \ar[u]_{\overline{\varphi}_{\eta,Y}^{\SL}} }
\]
Here $\theta_Y$ is induced by $\psi^{\Sp}_{Y}$. Theorem~\ref{thm_sCF} provides that $\overline{\varphi}^{\Sp}_{\eta,Y}$ is an isomorphism, thus we can take $t_{Y}=\theta_Y\circ(\overline{\varphi}^{\Sp}_{\eta,Y})^{-1}$. The first property is clear. The second property follows from the surjectivity of the natural map 
\[
\BO_{\hphantom{[} 0}^{[2\star]}(\pt)\cong \GW^{2\star}(\pt)\to \W^{2\star}(\pt) \cong \BO_{\eta 0}^{[2\star]}(\pt).
\]

For the third property recall that $\theta_Y$ and $\varphi_Y^{\Sp}$ map Thom classes of symplectic bundles to the corresponding Thom classes, so
\[
t_{\Th(E)}(\thc^{\BO}(\Tc))=t_{\Th(E)}(\thc^{\BO}(E,\phi))=\theta_Y(\thc^{\MSp}(E,\phi)\otimes 1)
=\thc^{\MSL}(E,\phi)\otimes 1=\thc^{\MSL}(\Tc)\otimes 1. 
\]
\end{proof}

Now we restrict our attention to the indices $([2\star],0)$, special linear Grassmannians $\SGr$ and corresponding Thom spaces. Recall the following "symplectic principle" for the special linear bundles (see \cite[Theorem 7]{An15}).

\begin{theorem}
\label{thm_cohsym}
Let $\Tc=(E,\lambda)$ be a special linear bundle of even rank over a smooth variety $X$. Then there exists a morphism of smooth varieties $p\colon Y\to X$ such that
$\MSL_{\eta *}^{[\star]}(Y)$ is a free $\MSL_{\eta *}^{[\star]}(X)$-module (via $p^{\MSL_\eta}$) and $p^*E$ has a canonical symplectic form $\phi$ compatible with the trivialization $p^*\lambda$.
\end{theorem}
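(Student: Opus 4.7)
The plan is to induct on $n$, where $\rank E=2n$. The base case $n=1$ is immediate: a rank-two special linear bundle $(E,\lambda)$ carries a canonical symplectic form $\phi(v,w)=\lambda(v\wedge w)$, whose Pfaffian recovers $\lambda$, so one takes $Y=X$ and $p=\mathrm{id}$.

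For the inductive step with $n\ge 2$, I would perform a two-step reduction that peels off a rank-two special linear summand of $E$ after base change, then applies the inductive hypothesis to the rank $2n-2$ complement. First, let $q\colon SGr_2(E)\to X$ denote the $\Gm$-torsor over the Grassmannian $Gr_2(E)$ of rank-two subbundles of $E$ obtained by removing the zero section of the determinant of the tautological rank-two subbundle, in direct analogy with the definition of $SGr(n,m)$ in the paper. Over $SGr_2(E)$ the tautological rank-two subbundle $\mathcal{S}\hookrightarrow q^*E$ comes with a canonical trivialization of its determinant, and the exact sequence $0\to\mathcal{S}\to q^*E\to q^*E/\mathcal{S}\to 0$ together with $q^*\lambda$ induces a trivialization of $\det(q^*E/\mathcal{S})$, making the quotient a special linear bundle of rank $2n-2$. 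Second, this extension need not split, so one passes to the Zariski-locally trivial affine bundle $Y_1\to SGr_2(E)$ parameterizing splittings (a torsor for $\Hom(q^*E/\mathcal{S},\mathcal{S})$); over $Y_1$ one has a genuine decomposition $p_1^*E\cong \mathcal{S}_1\oplus \mathcal{Q}_1$ into special linear bundles of ranks $2$ and $2n-2$ respectively, where $p_1\colon Y_1\to X$ is the natural composition and the trivializations tensor to $p_1^*\lambda$. Applying the inductive hypothesis to $\mathcal{Q}_1$ yields $p_2\colon Y\to Y_1$ over which $p_2^*\mathcal{Q}_1$ carries a compatible symplectic form, and taking the orthogonal sum of this form with the canonical rank-two symplectic form on $p_2^*\mathcal{S}_1$ produces the required symplectic structure on $p^*E$, where $p=p_1 p_2$.

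For the freeness of $MSL_\eta^{*,*}(Y)$ as an $MSL_\eta^{*,*}(X)$-module, the inductive hypothesis provides freeness of $MSL_\eta^{*,*}(Y)$ over $MSL_\eta^{*,*}(Y_1)$, the affine bundle $Y_1\to SGr_2(E)$ induces an isomorphism by $\A^1$-invariance, and the composition is then free provided one establishes freeness of $MSL_\eta^{*,*}(SGr_2(E))$ over $MSL_\eta^{*,*}(X)$. This last point is the main obstacle, and it is precisely where the inversion of $\eta$ is essential: without $\eta$-inversion an $SL$-oriented theory does not admit a satisfactory theory of characteristic classes, but after $\eta$-inversion the machinery developed in \cite{An12} furnishes well-behaved Euler classes and yields a projective-bundle-style theorem, exhibiting $MSL_\eta^{*,*}(SGr_2(E))$ as a free $MSL_\eta^{*,*}(X)$-module generated by powers of the Euler class of $\mathcal{S}$, in analogy with the ordinary projective bundle formula in oriented cohomology.
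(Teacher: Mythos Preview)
The paper does not actually prove this theorem: it is simply quoted as \cite[Theorem~7]{An12}, with no argument given in the present text. So there is no ``paper's own proof'' to compare against beyond that citation.

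Your inductive outline is essentially the splitting-principle argument that \cite{An12} carries out: peel off a rank-two special linear subbundle after base change to a suitable special linear Grassmannian, pass to an affine torsor to split the resulting extension, and iterate. The identification of a rank-two special linear bundle with a symplectic bundle via $\phi(v,w)=\lambda(v\wedge w)$ is exactly the mechanism used there, and your handling of the affine-bundle step by $\A^1$-invariance is correct.

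Two remarks are worth making. First, the freeness of $MSL_\eta^{*,*}(SGr_2(E))$ over $MSL_\eta^{*,*}(X)$ is not a lemma you can wave at: it \emph{is} the special linear projective bundle theorem, the main result of \cite{An12}, and its proof occupies most of that paper. Your sketch correctly identifies this as the place where $\eta$-inversion is indispensable, but you should be aware that you are invoking the central theorem of the cited reference, not a routine consequence of having Euler classes. Second, the explicit free basis in \cite{An12} is not literally ``powers of the Euler class of $\mathcal{S}$''; the description is more delicate and involves Euler classes of both the tautological subbundle and the quotient (reflecting the fact that $SGr_2(E)$ is a $\Gm$-torsor over an ordinary Grassmannian rather than a projective bundle). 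This does not affect the validity of your outline, since all you need is freeness, but the analogy with the oriented projective bundle formula is looser than your last sentence suggests.
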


\begin{lemma}
\label{lem_Thiso}
For the homomorphism
\[
t_{\Th(2n,m)}\colon \BO_{\eta 0}^{[2\star]}(\Th(2n,m)) \to  \overline{\MSL}_{\eta 0}^{[2\star]}(\Th(2n,m))
\]
we have 
\[
t_{\Th(2n,m)}(\thc^{\BO}(\Tc(2n,m)))=\thc^{\MSL}(\Tc(2n,m))\otimes 1.
\]
\end{lemma}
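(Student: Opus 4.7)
My plan is to reduce the assertion to the third property of Lemma~\ref{lem_section} by pulling back along a morphism that equips $\Tc(2n,m)$ with a compatible symplectic form, exploiting the ``symplectic principle'' of Theorem~\ref{thm_cohsym}.

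First I apply Theorem~\ref{thm_cohsym} to the special linear bundle $\Tc(2n,m)$ over $SGr(2n,m)$, obtaining a morphism $p\colon Y\to SGr(2n,m)$ such that $MSL_\eta^{*,*}(Y)$ is a free $MSL_\eta^{*,*}(SGr(2n,m))$-module via $p^{MSL_\eta}$ and such that $p^*\Tc(2n,m)$ carries a symplectic form $\phi$ compatible with $p^*\lambda$. Let $\bar p\colon Th(p^*\Tc(2n,m))\to Th(\Tc(2n,m))$ denote the induced map on Thom spaces. The construction of $t$ in Lemma~\ref{lem_section} as $t=\theta\circ(\overline{\varphi}_\eta^{Sp})^{-1}$ is manifestly natural, and pullback preserves Thom classes, so naturality yields
\[
\bar p^{\overline{MSL}_\eta}\!\left(t_{Th(\Tc(2n,m))}(th^{BO}(\Tc(2n,m)))\right) = t_{Th(p^*\Tc(2n,m))}\!\left(th^{BO}(p^*\Tc(2n,m))\right).
\]
Applying Lemma~\ref{lem_section}(3) to the pair $(p^*\Tc(2n,m),\phi)$, the right-hand side equals $th^{MSL}(p^*\Tc(2n,m))\otimes 1$, which in turn equals $\bar p^{\overline{MSL}_\eta}(th^{MSL}(\Tc(2n,m))\otimes 1)$ by naturality of the $MSL$ Thom class. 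Hence $\bar p^{\overline{MSL}_\eta}$ agrees on the two elements whose equality is claimed.

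It therefore suffices to show that $\bar p^{\overline{MSL}_\eta}\colon \overline{MSL}_\eta^{*,*}(Th(\Tc(2n,m)))\to \overline{MSL}_\eta^{*,*}(Th(p^*\Tc(2n,m)))$ is injective. Using the $MSL_\eta$-Thom isomorphism (available since $MSL$ is $SL$-oriented and $\bar p$ is covered by an isomorphism of bundles) this pullback translates into the pullback $p^{MSL_\eta}\colon MSL_\eta^{*,*}(SGr(2n,m))\to MSL_\eta^{*,*}(Y)$, which is the split inclusion of a free summand by the choice of $p$. A free direct-sum decomposition $MSL_\eta^{*,*}(Y)=\bigoplus_i MSL_\eta^{*,*}(SGr(2n,m))\cdot e_i$ remains a direct sum after applying $-\otimes_{MSL^{4*,2*}(pt)}BO_\eta^{4*,2*}(pt)$, so $p^{\overline{MSL}_\eta}$ and hence $\bar p^{\overline{MSL}_\eta}$ is injective, completing the proof.

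The main obstacle is the bookkeeping in the last step: one must check carefully that the Thom isomorphism for $\overline{MSL}_\eta^{*,*}$ is compatible with pullback along $\bar p$, and that the $MSL_\eta^{*,*}(SGr(2n,m))$-module freeness from Theorem~\ref{thm_cohsym} survives the base change by $BO_\eta^{4*,2*}(pt)$ along the coefficient ring $MSL^{4*,2*}(pt)$. Once this compatibility is recorded, the rest is a formal chase through naturality and Lemma~\ref{lem_section}(3).
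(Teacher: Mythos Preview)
Your proof is correct and follows the same approach as the paper: apply Theorem~\ref{thm_cohsym} to the tautological bundle, use naturality of $t$ together with Lemma~\ref{lem_section}(3) to verify the identity after pullback along the induced map on Thom spaces, and conclude from injectivity of that pullback on $\overline{MSL}_\eta$. The paper's treatment of the injectivity step is slightly terser (it simply records that freeness of $MSL_\eta^{*,*}(Y)$ over $MSL_\eta^{*,*}(SGr(2n,m))$ forces $p^{MSL_\eta}$ to remain injective after any extension of scalars), but your explicit reduction via the Thom isomorphism and preservation of the free decomposition under $-\otimes_{MSL^{4*,2*}(pt)}BO_\eta^{4*,2*}(pt)$ amounts to the same thing.
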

\begin{proof}
Put $\Tc(2n,m)=(E,\lambda)$. Theorem~\ref{thm_cohsym} provides a morphism of smooth varieties $p\colon Y\to \SGr(2n,m)$ such that $p^{\MSL_\eta}$ remains injective after every extension of scalars since the corresponding module stays free. Moreover, there exists a symplectic form $\phi$ on $p^*E$ agreeing with $p^*\lambda$.

Consider the following diagram.
\[
\xymatrix{
\overline{\MSL}_{\eta 0}^{[2\star]}(\Th(p^*E))  & & \BO_{\eta 0}^{[2\star]}(\Th(p^*(E))) \ar[ll]_{t_{\Th(p^*(E))}}\\
\overline{\MSL}_{\eta 0}^{[2\star]}(\Th(E)) \ar[u]^{\widetilde{p}^{\MSL_\eta}}  & & \BO_{\eta 0}^{[2\star]}(\Th(E)) \ar[u]^{\widetilde{p}^{\BO_\eta}} \ar[ll]_{t_{\Th(E)}}
}
\]
Here $\widetilde{p}\colon \Th(p^*E)\to \Th(E)$ is induced by the morphism $p$. Naturality of $t$ yields that the diagram is commutative.
Hence, by functoriality of Thom classes and Lemma~\ref{lem_section}, we obtain
\begin{multline*}
\widetilde{p}^{\MSL_\eta}t_{\Th(E)}(\thc^{\BO}(\Tc(2n,m)))=t_{\Th(p^*E)}\widetilde{p}^{\BO_\eta}(\thc^{\BO}(\Tc(2n,m)))=t_{\Th(p^*E)}(\thc^{\BO}(p^*E,p^*\lambda))=\\
=t_{\Th(p^*E)}(\thc^{\BO}(p^*E,\phi))=\thc^{\MSL_\eta}(p^*E,\phi)\otimes 1=\thc^{\MSL_\eta}(p^*\Tc(2n,m)\otimes 1)=\\
=\widetilde{p}^{\MSL_\eta}(\thc^{\MSL}(\Tc(2n,m))\otimes 1).
\end{multline*}

The claim follows since $\widetilde{p}^{\MSL_\eta}$ is injective.
\end{proof}

\begin{theorem}
\label{thm_slCF}
For every small pointed motivic space $Y$ morphism $\varphi^{\SL}$ induces an isomorphism of bigraded rings
\[
\overline{\varphi}_{\eta,Y}^{\SL}\colon \MSL_{\eta *}^{[\star]}(Y)\otimes_{\MSL_{\hphantom{[} 0}^{[2\star]}(\pt)} \BO_{\eta 0}^{[2\star]} (\pt) \xrightarrow{\simeq} \BO_{\eta *}^{[\star]}(Y).
\]
\end{theorem}
\begin{proof}
First we focus on the indices $([2\star],0)$. By Lemma~\ref{lem_section} it follows that the homomorphism
\[
\overline{\varphi}_{\eta,Y}^{\SL}\colon \overline{\MSL}_{\eta 0}^{[2\star]}(Y) \xrightarrow{} \BO_{\eta 0}^{[2\star]}(Y)
\]
is surjective. In order to check that its section $t_Y$ is surjective as well take an arbitrary element $\alpha\otimes b \in \overline{\MSL}_{\eta 0}^{[2\star]}(Y)$. One may assume that $\alpha=\beta\cup\eta^{-n}$ for some $n\in \mathbb{N}$ and $\beta \in \MSL_{\hphantom{[} -n}^{[2m-n]}(Y)\cong \MSL^{4m,2m}(\Sigma^{n,n}Y)$. By a similar argument as in Lemma~\ref{lem_smallMult} we may assume that 
\[
\Sigma_T^{2i}\Sigma^{n,n}\beta=g^{\MSL}\thc^{\MSL}(\Tc(2m+2i,(2m+2i)^2))
\]
for some $i,j\in \mathbb{N}$ and $g\in \Hom_{\Hp(k)} ((\Sigma^{n,n}Y)\wedge T^{\wedge 2i},\Th(2m+2i,(2m+2i)^2))$.
Set $r=2m+2i$ and consider the following commutative diagram.
\[
\xymatrix{
\overline{\MSL}_{\eta n}^{[2\star+2i+n]} \ar[d]_{g^{\MSL_\eta}\otimes \id} & \BO_{\eta n}^{[2\star+2i+n]} (\Th(r,r^2))  \ar[l]_(0.47){t_{\Th}} \ar[d]_{g^{\BO_\eta}}\\
\overline{\MSL}_{\eta n}^{[2\star+2i+n]} (\Sigma_T^{2i}\Sigma^{n,n}Y) & \BO_{\eta n}^{[2\star+2i+n]} (\Sigma_T^{2i}\Sigma^{n,n}Y)  \ar[l]_(0.45){t_{\Sigma^{2i}_T\Sigma^{n,n}Y}} \\
\overline{\MSL}_{\eta 0}^{[2\star]} (Y) \ar[u]^{\Sigma^{2i}_T\Sigma^{n,n}\otimes \id}_{\simeq} & \BO_{\eta 0}^{[2\star]} (Y) \ar[l]_{t_Y} \ar[u]^{\Sigma^{2i}_T\Sigma^{n,n}}_{\simeq}.
}
\]
By Lemma~\ref{lem_Thiso} and the above considerations we have
\begin{multline*}
t_Y((\Sigma^{2i}_T\Sigma^{n,n}\otimes \id)^{-1}g^{\BO_\eta}(\thc^{\BO}(\Tc(r,r^2))\cup \eta^{-n} \cup b))=\\
=(\Sigma^{2i}_T\Sigma^{n,n}\otimes \id)^{-1}(g^{\MSL_\eta}\otimes 1)t_{\Th}(\thc^{\BO}(\Tc(r,r^2))\cup\eta^{-n}\cup b)=\\
=(\Sigma^{2i}_T\Sigma^{n,n}\otimes \id)^{-1}(g^{\MSL_\eta}\otimes 1)(\thc^{\MSL}(\Tc(r,r^2))\cup\eta^{-n}\otimes b)=\\
=\beta\cup\eta^{-n}\otimes b=\alpha\cup b.
\end{multline*}

Thus the section
\[
t_Y\colon \BO_{\eta 0}^{[2\star]} (Y) \xrightarrow{}  \overline{\MSL}_{\eta 0}^{[2\star]}(Y)
\]
is surjective and $\overline{\varphi}_{\eta,Y}^{\SL}$ is an isomorphism for the indices $([2\star],0)$.

In order to obtain the claim for arbitrary indices $([n],i)$ one should first use the suspension isomorphisms 
\[
\overline{\MSL}_{\eta i}^{[n]}(Y)\cong\overline{\MSL}_{\eta 0}^{[n]}(Y\wedge S^{i,0}),\quad
\BO_{\eta i}^{[n]}(Y)\cong \BO_{\eta 0}^{[n]}(Y\wedge S^{i,0})
\]
in case of $i>0$ and the analogous ones with $\wedge S^{i,0}$ replaced by $\wedge S^{i,i}$ in case of $i<0$ and then 
\[
\overline{\MSL}_{\eta 0}^{[m]}(Y)\cong\overline{\MSL}_{\eta 0}^{[2m]}(Y\wedge S^{2m,m}),\quad
\BO_{\eta 0}^{[m]}(Y)\cong \BO_{\eta 0}^{[2m]}(Y\wedge S^{2m,m})
\]
for $m>0$ and the similar ones with $S^{2m,m}$ replaced by $S^{-2m,-m}$ for $m<0$.
\end{proof}

\begin{corollary}
\label{cor_slCFW}
For every smooth variety $X$ morphism $\varphi^{\SL}$ induces an isomorphism of bigraded rings
\[
\MSL_{\eta *}^{[\star]}(X)\otimes_{\MSL_{\hphantom{[} 0}^{[2\star]}(\pt)} \W^{2\star} (\pt) \xrightarrow{\simeq} \W^{\star}(X)[\eta,\eta^{-1}].
\]
\end{corollary}
\begin{proof}
The proof is similar to that of Corollary~\ref{cor_sCFW} with $\MSL$ replacing $\MSp$. 
\end{proof}

\end{document}